\newtheorem{thm}{Theorem}[section]
\theoremstyle{definition}
\newtheorem{de}[thm]{Definition}
\newtheorem{ex}[thm]{Example}
\newtheorem{lem}[thm]{Lemma}
\newtheorem{co}[thm]{Corollary}
\newtheorem{re}[thm]{Remark}
\newtheorem{pro}[thm]{Proposition}
\newtheorem*{claim}{Claim}
{
\newcommand{\ba}{\begin{array}} \newcommand{\ea}{\end{array}}

\newcommand{\N}{\mathbb{N}}                                            
\newcommand{\R}{\mathbb{R}}                                         
\newcommand{\eps}{\varepsilon}                                         
\DeclareMathOperator{\diam}{diam}

\DeclareMathOperator{\sep}{sep}
\DeclareMathOperator{\CAT}{CAT}                                        
     
\DeclareMathOperator{\BW}{(BW)}
\DeclareMathOperator{\UBW}{(UBW)}

\begin{document}

\title{A uniform betweenness property in metric spaces and its role in the 
quantitative analysis of the ``Lion-Man'' game}

\author{Ulrich Kohlenbach$^{a}$, Genaro L\'{o}pez-Acedo$^{b}$, Adriana Nicolae$^{c}$}
\date{}
\maketitle

\begin{center}
{\scriptsize
$^{a}$Department of Mathematics, Technische Universit\" at Darmstadt, Schlossgartenstra\ss{}e 7, 64289 Darmstadt, Germany
\ \\
$^{b}$Department of Mathematical Analysis - IMUS, University of Seville, C/ Tarfia s/n, 41012 Seville, Spain
\ \\
$^{c}$Department of Mathematics, Babe\c s-Bolyai University, Kog\u alniceanu 1, 400084 Cluj-Napoca, Romania \\
\ \\
E-mail addresses: kohlenbach@mathematik.tu-darmstadt.de (U. Kohlenbach), glopez@us.es (G. L\'{o}pez-Acedo), anicolae@math.ubbcluj.ro (A. Nicolae)
}
\end{center}

\begin{abstract}
In this paper we analyze, based on an interplay between ideas and techniques from logic and geometric analysis, a pursuit-evasion game. More precisely, we focus on a uniform betweenness property and use it in the study of a discrete lion and man game with an $\eps$-capture criterion. In particular, we prove that in uniformly convex bounded domains the lion always wins and, using ideas stemming from proof mining, we extract a uniform rate of convergence for the successive distances between the lion and the man. As a byproduct of our analysis, we study the relation among different convexity properties in the setting of geodesic spaces.  \\

\noindent {\em MSC: 91A24, 49N75 (primary), 53C23, 03F10 (secondary)} 
\\

\noindent {\em Keywords}: Lion and man game, \and rate of convergence, \and uniform convexity, \and betweenness property.
\end{abstract}

%
\section{Introduction}
The lion and man problem, which goes back to R. Rado, is one of the most challenging pursuit-evasion games. In the inspiring book {\it Littlewood's Miscellany} \cite{Li86} it is described as follows:  
\begin{quotation}
\sl A lion and a man in a closed circular arena have equal maximum speeds. What tactics should the lion employ to be sure of his meal? 
\end{quotation}
A detailed discussion of the solution to this problem can be found in \cite{Cr64,Li86,Na07}. Very similar problems have appeared under different names in the literature (e.g. the robot and the rabbit \cite{H69} or the cop and the robber \cite{AF84}).

The analysis of the lion and man game is closely tied to the geometric structure of the domain where the game is played. This fact, as well as the potential applications in different fields such as robotics \cite{BH10},  biology \cite{BBH08},  and random processes \cite{BBK13,BBK14}, have given rise to several variants of this game, both continuous \cite{BLW12} and discrete (the discrete version is attributed to D. Gales, see \cite{S01} for more details). Such games involve one or more evaders  in a fixed domain being hunted by one or more pursuers who win the game if certain appropriate capture criteria are satisfied. Such criteria may be physical capture (the pursuers move to the location of the evaders) or $\eps$-capture \cite{AGR92} (the pursuers get within a distance less than $\eps$ to the evaders). Another very important feature in the game is the possibility of assuming different degrees of freedom in the movement of the lion.  

Here we focus on a discrete-time equal-speed game   with an $\eps$-capture criterion. The domain $X$ of our game is a  geodesic space. Initially, the lion and the man are located at two points in $X$, $L_0$ and $M_0$, respectively. One fixes a positive upper bound $D$ on the distance the lion and the man may jump. After $n$ steps, the lion moves from the point $L_n$ to the point $L_{n+1}$ along a geodesic from $L_{n}$ to $M_{n}$, that is $d(L_n,M_n)= d(L_n,L_{n+1}) +d(L_{n+1},M_n)$,  such that its distance to $L_{n}$ equals $\min\{D,d(L_n,M_n)\}$. The man  moves from the point $M_{n}$ to any point $M_{n+1} \in X$ which is within distance $D$. Given a metric space, we say that the lion wins if $\lim_{n\to\infty} d(L_{n+1},M_n) = 0$ for any pair of sequences $( L_{n}),  (M_{n})$ that satisfy the previous metric conditions for any $D>0$. Otherwise the man wins. When we refer in the sequel to the Lion-Man game, we will always mean the game we have just described. 

In \cite{AleBisGhr10}, a similar game is introduced for the particular case of uniquely geodesic spaces, so that the movement of the lion is completely determined by the movement of the man. The authors prove that in the setting of nonpositively curved bounded domains the lion always wins. Further advances in this problem were made in \cite{LNP1} (see also \cite{Bac12,Yuf19}), where a characterization of compactness of the domain in terms of the success of the lion was obtained in complete, locally compact, strongly convex  geodesic spaces. The main ingredient in the proof of this result is the fact that strongly convex spaces   satisfy the betweenness property. However, none of these results provides any information on the speed of convergence towards $0$ of the sequence $d(L_{n+1},M_n)$.

Our aim in this paper is to introduce and study a quantitative uniform version of the aforementioned betweenness property  and to use it in the analysis of the Lion-Man game. This allows us to weaken the topological and geometric hypotheses that ensure the success of the lion and to give a rate of convergence for the sequence $d(L_{n+1},M_n)$ that only depends on a modulus quantifying the uniform betweenness property. The ideas that led to our results have their roots in proof mining. By `proof mining' we mean the logical analysis, using proof-theoretic tools, of mathematical proofs with the aim of extracting relevant information hidden in the proofs. This new information can be both of quantitative nature, such as algorithms and effective bounds, as well as of qualitative nature, such as uniformities in the bounds or weakening of the premises. A comprehensive reference for proof mining is the book \cite{K08} (see 
also \cite{K19} for a recent survey). 
 
The organization of the paper is as follows. In Section \ref{section-rate} we prove the main result  which relies on the betweenness property. This property (see Definition \ref{definition-betweenness}) holds e.g. in all strictly convex normed spaces, but also in a wide class of geodesic spaces as we will show in Sections \ref{section-UBTW-uniquely-geod} and \ref{section-UBTW-nonuniquely-geod}. Betweenness relations have already been considered in very early works such as \cite{HK17,DW}.  We introduce a quantitative uniform variant of the betweenness property (see Definition \ref{def-uniform-btw}), and associate to this definition a modulus of uniform betweenness. All these definitions are purely metric, what led to state our main result, Theorem \ref{thm-main}, in a purely metric setting. More precisely, we prove  that under the assumption of boundedness and uniform betweenness for the domain, the lion always wins. The result applies in particular for all uniformly convex normed spaces, $\CAT(\kappa)$ spaces (of sufficiently small diameter for $\kappa > 0$), or compact uniquely geodesic spaces satisfying the betweenness property. Consequently, we notably weaken and unify previously known geometric conditions that were imposed on the domain in order to guarantee the success of the lion. Moreover, Corollary \ref{co-main} provides a rate of convergence for the sequence $d(L_{n+1},M_n)$ towards $0$, and hence gives an explicit bound on the number of steps to be taken for an $\eps$-capture.

In Sections \ref{section-UBTW-uniquely-geod} and \ref{section-UBTW-nonuniquely-geod} we analyze the connection of the betweenness property with other convexity properties. In Section  \ref{section-UBTW-uniquely-geod}  we consider the case of uniquely geodesic spaces. Although the existence of unique geodesics between any two given points in a geodesic space is a widely known and well-understood condition, here we consider a quantitative uniform version thereof (see Definition \ref{def-UC}) and study its connection with other convexity properties. In Theorem \ref{thm-UC-UU} we prove that uniformly convex geodesic spaces admitting a monotone modulus of uniform convexity $\eta$ are uniformly uniquely geodesic and one can define a modulus of uniform uniqueness in terms of $\eta$. Actually, in normed vector spaces, uniform convexity is equivalent to uniform uniqueness of geodesics, but in general these two concepts are different. This distinction in nonlinear settings is an interesting feature of uniform uniqueness which could motivate the further study of its relevance.  
In Theorem \ref{thm-UU-UBTW} we show that the uniform betweenness property holds in uniformly uniquely geodesic spaces whose distance function satisfies a convexity condition, and that a modulus of uniform uniqueness generates a modulus of uniform betweenness.  Section \ref{section-UBTW-nonuniquely-geod} is devoted to  provide two particular instances of nonuniquely geodesic spaces  where the uniform betweenness property holds: geodesic Ptolemy spaces (which can be nonuniquely geodesic) and a certain nonstrictly convex normed space of dimension 3 considered in \cite{DW}.  In both cases a modulus of uniform betweenness is computed.  The last section contains a general discussion on the proof mining techniques used to develop our quantitative analysis.

\section{A rate of convergence for the Lion-Man game}\label{section-rate}
The main goal of this section is to introduce and study a new uniform concept of betweenness for metric spaces and to establish its crucial role for 
the Lion-Man 
game. In particular, the concept of a `modulus of uniform betweenness' 
will be used for a quantitative analysis of the Lion-Man game. This uniform betweenness property will be further studied in Sections \ref{section-UBTW-uniquely-geod} and \ref{section-UBTW-nonuniquely-geod}.
\subsection{Betweenness and uniform betweenness}
We start by introducing two important geometric concepts.
\begin{de}[Condition 1 in \cite{DW}]\label{definition-betweenness}
A metric space $(X,d)$ satisfies the {\it betweenness property} $\BW$ if for 
any pairwise distinct points $x,y,z,w\in X$ the following holds
\begin{eqnarray} \nonumber
\left.\begin{array}{l}
d(x,y)+d(y,z)\le d(x,z) \\
d(y,z)+d(z,w)\le d(y,w)
\end{array}
\right\}
& \Rightarrow & d(x,z)+d(z,w)\le d(x,w).
\end{eqnarray}
\end{de}
Postulates for betweenness relations and their relevance with other convexity 
conditions already appeared in very early works such as \cite{HK17,DW}. 
The property given in Definition \ref{definition-betweenness} was studied 
in connection to the geometry of geodesic metric spaces in \cite{Pap05,Nic13,LNP1} (we refer to Section \ref{section-UBTW-uniquely-geod} for some basic definitions on geodesic metric spaces). Proposition 3.4 in \cite{Nic13} shows in particular that $\BW$ holds in every uniquely geodesic space $X$ satisfying the following convexity condition
\begin{equation}\label{eq-metric-conv}
d(z,(1-t)x+ty) \le (1-t)d(z,x) + td(z,y),
\end{equation}
for all $x,y,z \in X$ and all $t \in [0,1]$. In other words, given any $z \in X$, the function $d(z,\cdot)$ is convex. This condition holds e.g. in any strictly convex normed space, any geodesic space that is nonpositively curved in the sense of Busemann or in any CAT$(\kappa)$ space (of diameter smaller than $\pi/(2\sqrt{\kappa})$ if $\kappa > 0$).

The $\BW$ condition is also satisfied by some nonstrictly convex Banach spaces (see \cite{DW}) but 
e.g. not by $(\R^2,\|\cdot\|_{\infty})$ or $(\R^2,\|\cdot\|_1).$ 

We next present a stronger uniform version of $\BW$
which is equivalent to the ordinary one when $X$ is compact but which, as 
we will see in the next sections, also holds in many noncompact situations e.g. 
in uniformly convex Banach spaces.

Let $(X,d)$ be a metric space. If $A$ is a nonempty and bounded subset of $X$, the {\it diameter} of $A$ is 
\[\diam(A) = \sup\{d(a,a'): a,a' \in A\},\]
and the {\it separation} of $A$ is 
\[\sep(A) = \inf\{d(a,a'): a,a' \in A, a \ne a'\}.\] 

\begin{de}\label{def-uniform-btw}
A metric space $(X,d)$ satisfies the {\it uniform betweenness property} 
$\UBW$ if for all $a,b,\eps>0$ there exists $\theta>0$ such that for all 
$x,y,z,w\in X$ we have
\begin{eqnarray} \nonumber
\left.\begin{array}{l}
\sep\{x,y,z,w\}\ge a\\
\diam\{x,y,z,w\} \le b\\
d(x,y)+d(y,z)\le d(x,z)+\theta\\
d(y,z)+d(z,w)\le d(y,w)+ \theta
\end{array}
\right\}
& \Rightarrow & d(x,z)+d(z,w)\le d(x,w)+\eps.
\end{eqnarray}
Any function $\Theta:(0,\infty)^3\to (0,\infty)$ which provides for given 
$\eps,a,b>0$ such a 
$\theta = \Theta(\eps,a,b)$ is called a {\it modulus of uniform betweenness}.
\end{de}

It is easy to see that every metric space satisfying $\UBW$ also satisfies $\BW$. We include the proof of the converse relation in the presence of compactness.

\begin{pro}\label{prop-compact-BW}
Compact metric spaces with $\BW$ satisfy $\UBW$ as well.
\end{pro}
\begin{proof}
Although this fact is almost straightforward, we include its proof because $\UBW$ plays an essential role in this work. We argue by contradiction. Suppose that $(X,d)$ is a compact metric space with $\BW$, but without $\UBW$. Then there exist $\eps,a > 0$ such that for all $n \in \mathbb{N}$ we can find points $x_n, y_n, z_n, w_n \in X$ with $\sep\{x_n, y_n, z_n, w_n\} \ge a$ satisfying 
\[d(x_n,y_n)+d(y_n,z_n)\le d(x_n,z_n)+\frac{1}{n}, \quad d(y_n,z_n)+d(z_n,w_n)\le d(y_n,w_n) + \frac{1}{n}\]
and 
\begin{equation} \label{eq1-cmp-UBW}
 d(x_n,z_n)+d(z_n,w_n) > d(x_n,w_n)+\eps. 
\end{equation}
By compactness, we may assume that there exist $x,y,z,w \in X$ such that $x_n \to x$, $y_n \to y$, $z_n \to z$ and $w_n \to w$. Then $x,y,z,w$ are pairwise distinct and 
\[d(x,y)+d(y,z)\le d(x,z), \quad d(y,z)+d(z,w)\le d(y,w).\]
By $\BW$, we obtain $d(x,z)+d(z,w)\le d(x,w)$. This contradicts \eqref{eq1-cmp-UBW}.
\end{proof}

In the case of normed spaces $(X,\|\cdot\|)$, 
it was proved in \cite{DW} that the betweenness property
$\BW$ is equivalent to the following property: for all $x,y,z\in X$,
\[ \BW': \quad \| x\|=\| y\|=\| z\|=\left\|\frac{x+y}{2}\right\|=
\left\|\frac{y+z}{2}\right\|=1\quad \Rightarrow \quad \| x+y+z\|=3. \]
$\BW'$ also has an obvious uniformization:
for all $\eps>0$ there exists $\delta>0$ such that for all $x,y,z\in X$,
\begin{eqnarray}\nonumber
\UBW':\quad
\left.\begin{array}{l}
\| x\|=\| y\|=\| z\|=1\\[2mm]
\displaystyle \min\left\{\left\|\frac{x+y}{2}\right\|,\left\|\frac{y+z}{2}\right\|\right\}\ge 1-\delta
\end{array}
\right\}
& \Rightarrow & \| x+y+z\|\ge 3-\eps
\end{eqnarray}
together with the corresponding concept of a modulus $\delta:(0,\infty)\to
(0,\infty)$ such that $\delta(\eps)$ satisfies $\UBW'$. 

We provide below a uniform quantitative analysis of the proof
of the equivalence of $\BW$ and $\BW'$ given in \cite{DW}. Before stating this result, we include the following simple property.
\begin{lem}\label{lemma-norm-comb}
Let $(X,\Vert \cdot \Vert)$ be a normed space, $\eps, \mu,\lambda\ge 0$, and $x,y \in X$ with $\| x+y\|\ge 
\| x\| +\| y\| -\eps$. Then $\| \lambda x+\mu y\|\ge \lambda\| x\|+\mu\| y\| -\max\{ \lambda,\mu\}\eps$.
\end{lem}

\begin{pro} \label{prop-modulus-UBW'}
Let $(X,\|\cdot\|)$ be a normed space. Then $X$ satisfies 
$\UBW$ if and only if it satisfies $\UBW'$. Moreover, respective moduli can be transformed 
into each other by the transformations 
\[\Theta(\eps,a,b):=2a\cdot \delta\left(\frac{\eps}{2b}\right) \quad \text{and} \quad
\delta(\eps):=\frac{1}{2}\min\left\{ \Theta\left(\frac{\eps}{2},
\frac{1}{2},3\right), 
\frac{1}{2},\frac{\eps}{2}\right\}. \]
\end{pro}
\begin{proof} Let first $\delta$ be a modulus for $X$ satisfying $\UBW'$.
Suppose $x,y,z,w\in X$ such that $\sep\{ x,y, z,w\}\ge a>0$. Take 
$b\ge\diam\{ x,y,z,w\}$ and assume that for $\Theta$ as defined in the 
statement we have
\[\| x-y\|+\| y-z\| \le \| x-z\| +\Theta, \quad \| y-z\|+\| z-w\|\le \| y-w\| +\Theta. \]
Then taking $\tilde{u}:=y-x$, $\tilde{v}:=z-y$, $\tilde{w}:=w-z$,
\[ \|\tilde{u}\|+\| \tilde{v}\| \le\| \tilde{u}+\tilde{v}\|
+\Theta, \quad \|\tilde{v}\|+\| \tilde{w}\| \le\| \tilde{v}+\tilde{w}\|+\Theta. \] 
Hence, using Lemma \ref{lemma-norm-comb} and the fact that $\| \tilde{u}\|, \|\tilde{v}\|, \| \tilde{w}\|\ge a$, we get
\[ \| u\|+\| v\| \le \| u+v\| +2\delta(\tilde{\eps}), \quad
\| v\|+\| w\| \le \| v+w\| +2\delta(\tilde{\eps}), \]
where 
\[u:=\frac{\tilde{u}}{\| \tilde{u}\|}, \quad v:=\frac{\tilde{v}}{\| \tilde{v}\|}, \quad w:=\frac{\tilde{w}}{\| \tilde{w}\|} \quad \text{and} \quad\tilde{\eps}:=\frac{\eps}{2b}.\]
Thus,
\[ \min\left\{\left\| \frac{u+v}{2}\right\|,\left\| \frac{v+w}{2}\right\|\right\}\ge 1-\delta(\tilde{\eps})\] and so, by $\UBW'$, 
$\| u+v+w\| \ge \| u\| +\| v\| +\| w\| -\tilde{\eps}.$ In particular,
\begin{equation}\label{eq2}
\| u\| +\| v\| \le \| u+v\| +\tilde{\eps} \quad \text{and} \quad \| u+v+w\|\ge \| u+v\| +\| w\| -\tilde{\eps}.    
\end{equation}
Hence for $\alpha:=\| y-x\|$, $\beta:=\| z-y\|$, $\gamma:=\|w-z\|$ we have $\alpha,\beta,\gamma\le b$ and 
(w.l.o.g. $\beta\ge \alpha$)
\begin{align*}
\| x-w\| & =\|\tilde{u}+\tilde{v}+\tilde{w}\|=\|\alpha u+\beta v+\gamma w\| \ge  \| \beta(u+v)+\gamma w\| -(\beta-\alpha)\| u\| \\
&\ge \beta\|u+v\|+\gamma \|w\| -(\beta-\alpha)\| u\|-b\cdot\tilde{\eps} \quad \text{by \eqref{eq2} and Lemma \ref{lemma-norm-comb}} \\
&\ge \beta(\| u\| +\| v\| -\tilde{\eps})+\gamma\| w\| -\beta \| u\| +\alpha \| u\| -b\cdot\tilde{\eps} 
\quad \text{by \eqref{eq2}} \\ 
&\ge \alpha\| u\| +\beta\| v\| +\gamma\| w\| -2b\tilde{\eps} =
\| \tilde{u}\| +\|\tilde{v}\|+\| \tilde{w}\| -\eps\ge \| x-z\| +\| z-w\| -\eps. 
\end{align*}

In the other direction, let $\Theta$ be a modulus for $\UBW$ (which by
switching to $\Theta'(\eps,a,b):=\min\{ \eps,1/2,\Theta(\eps,a,b)\}$ 
we may assume to satisfy $\Theta(\eps,a,b)\le \min\{\eps,1/2\}$). 
Define 
\[\delta(\eps):=\frac{1}{2}\Theta\left(\frac{\eps}{2},\frac{1}{2},3\right)\]
and assume 
that, given $\eps>0$, the points $u,v,w\in X$ satisfy the assumption 
of $\UBW'$, i.e. 
\[ \| u\|=\| v\|=\| w\|=1, \quad \| u+v\|\ge \| u\| +\| v\|-
\Theta\left(\frac{\eps}{2},\frac{1}{2},3\right), \quad \| v+w\|\ge \| v\| +\| w\|-
\Theta\left(\frac{\eps}{2},\frac{1}{2},3\right).\] 
Denoting $x:=0$, $y:=u$, $z:=u+v$, $t:=u+v+w$, the previous two inequalities become
\[ \| z-x\|\ge \| y-x\| +\| z-y\|-
\Theta\left(\frac{\eps}{2},\frac{1}{2},3\right), \quad \|t-y\|\ge \| z-y\| +\| t-z\|-
\Theta\left(\frac{\eps}{2},\frac{1}{2},3\right).\] 
As $\sep\{ x,y,z,t\} \ge 1/2$ and $\diam\{ x,y,z,t\}\le 3$, by $\UBW$,
\[ \| u+v\| +\| w\| =\| z-x\| +\| t-z\|\le \| t-x\| +\frac{\eps}{2} =
\| u+v+w\| +\frac{\eps}{2} \] and so 
\[ 3=\| u\|+\| v\| +\| w\| \le \| u+v\| +\| w\|+
\Theta\left(\frac{\eps}{2},\frac{1}{2},3\right)\le \| u+v+w\| +
\Theta\left(\frac{\eps}{2},\frac{1}{2},3\right)+\frac{\eps}{2}\le 
\| u+v+w\| +\eps. \]
\end{proof}
\subsection{A rate of convergence for the Lion-Man game}
We analyze in the sequel the Lion-Man game in the 
context of general metric spaces which satisfy $\UBW$. We recall first the exact definition of the game in this 
abstract setting. Let $(X,d)$ be 
a metric space. By a {\it Lion-Man game} with speed $D>0$ 
we mean a pair $\langle (M_n),(L_n) 
\rangle$ of sequences 
in $X$ such that for all $n\in\N$ 
\[ \left\{ \ba{l} 
d(M_n,M_{n+1})\le D, \ d(L_{n+1},L_n)+d(L_{n+1},M_n)=d(L_n,M_n) \ \mbox{and}
\\[2mm] d(L_n,L_{n+1})=\min\{ D,d(L_n,M_n)\}. \ea\right. \] 
We say that the lion wins if the sequence $(d(L_{n+1},M_n))$ converges to $0$. Otherwise the man wins. 

The main result of this paper shows that the lion always wins if $X$ is bounded 
and satisfies $\UBW$. Moreover, we provide an 
explicit rate of convergence for the sequence $(d(L_{n+1},M_n))$ towards $0$ 
which only depends on $D,$ $\varepsilon,$ an upper bound $b\ge \diam(X)$ 
and a modulus of uniform betweenness $\Theta$ for $X.$ 

Before giving our main result, we recall the following property of bounded nonincreasing real sequences which follows from Proposition 2.27
and Remark 2.29 in \cite{K08}.
\begin{lem}[Kohlenbach \cite{K08}] \label{lemma-mon-seq}
Let $K>0$ and $(a_n)$ be a nonincreasing sequence in $[0,K]$. Then 
\[\forall \tau > 0\; \forall g:\N\to\N\; \exists I \le \tilde{g}^{\left(\left\lceil\frac{K}{\tau}\right\rceil\right)}(0) \; \forall n,m\in[0,g(I)]\; \left(|a_{I+n} - a_{I+m}| \le \tau\right),\]
where $\tilde{g} := \text{Id}+g$ and $\tilde{g}^{n+1}(0) := \tilde{g}(\tilde{g}^n(0))$, $\tilde{g}^0(0) := 0$.
\end{lem}

Denote $D_n = d(L_n,M_n)$, $n \in \N$. Note first that if $D_n \ge D$, then 
\begin{equation}\label{eq-Dn-decr}
D_{n+1} \le d(L_{n+1},M_n) + d(M_n,M_{n+1}) = D_n - D + d(M_n,M_{n+1}) \le D_n.
\end{equation}
Thus, if $D_n \ge D$ for all $n \in \N$, then $(D_n)$ is nonincreasing.

We can distinguish two mutually exclusive situations when the lion wins:
\begin{itemize}
\item[(1)] there exists $n_0 \in \N$ such that $D_{n_0} < D$. 
\item[(2)] $D_n \ge D$ for all $n \in \N$ and $\lim_{n \to \infty}D_n = D$.
\end{itemize}

In the following, let $(X,d)$ be a metric 
space which satisfies $\UBW$ with
a modulus of uniform betweenness 
$\Theta:(0,\infty)^3\to (0,\infty).$ 
We define $\Theta(\eps):=\Theta(\eps,\eps,b)$ and assume w.l.o.g. that 
$\Theta(\eps)\le \eps$ for all $\eps>0$ (otherwise take 
$\Theta'(\eps,b):=\min\{\eps,\Theta(\eps,b)\}$). Let $b\ge \diam(X)>0$ and $D>0.$ Take $N\in\N$ such that 
\begin{equation}\label{eq-diam-A}
b+1 < ND,
\end{equation}
e.g. $N:=\left\lceil \frac{b+1}{D}\right\rceil +1.$

\begin{thm}\label{thm-main}
Let $\langle (M_n),(L_n)\rangle$ be a Lion-Man game in $X$ with speed $D.$
Then 
\[ \forall \eps>0\,\forall n\ge\Omega_{D,b,\Theta}(\eps) \ 
\left( d(L_n,M_n)<D+\eps\right), \]
where 
\[ \Omega_{D,b,\Theta}(\eps):=N+N\left\lceil \frac{b}{\Theta^{(N)}(\alpha)}
\right\rceil \ \mbox{with} \]
\begin{equation}\label{thm-main-eq-eps} 
0<\alpha\le\min\left\{ \frac{1}{N},\frac{\eps}{2},\frac{D}{2}\right\}. 
\end{equation} 
\end{thm}
\begin{co}\label{co-main} Under the same assumptions:
\[ \forall \eps>0\,\forall n\ge\Omega_{D,b,\Theta}(\eps) \ 
\left( d(L_{n+1},M_n)<\eps\right). \]
\end{co}
\begin{proof} The claim follows from the theorem since 
$d(L_{n+1},M_n)=\max\{0, d(L_n,M_n)-D\}.$
\end{proof}

\begin{re}
Using Proposition \ref{prop-compact-BW}, we obtain as an immediate consequence one of the implications proved in \cite[Theorem 4.2]{LNP1}, namely that the lion always wins the Lion-Man game played in a compact geodesic space that satisfies $\BW$. Even more, the condition that the space is uniquely geodesic imposed in \cite[Theorem 4.2]{LNP1} is no longer assumed (see also \cite{Yuf19}).
\end{re}

\begin{proof}[Proof of Theorem \ref{thm-main-eq-eps}]
Let $\eps > 0$ and $0<\alpha\le \min\left\{1/N,\eps/2,D/2 \right\}.$ 
We use the notation introduced above. For simplicity, denote
\[\omega = \Omega_{D,b,\Theta}(\eps).\] 
Suppose first that there exists $n_0 \in \mathbb{N}$ such that $D_{n_0} < D$. If $n_0 \le \omega$, then for all $n \ge n_0$, $D_n \le D < D+\eps$ and the conclusion holds. 

So we only need to consider the following two cases:
\begin{itemize}
\item[(i)] there exists $n_0 > \omega$ with $D_{n_0} < D$ and $D_n \ge D$ for all $n \le n_0 - 1$.
\item[(ii)] $D_n \ge D$ for all $n \in \mathbb{N}$.
\end{itemize}

Observe that in case (i), applying \eqref{eq-Dn-decr}, $D_{n+1} \le D_n$ for all $n \le n_0 - 1$. Then it is enough to show that there exists $n \le \omega$ such that $D_n < D+\eps$. Indeed, if $k \in [\omega, n_0-1]$, then $D_k \le D_\omega \le D_n < D+ \eps$. Otherwise, if $k \ge n_0$, $D_k \le D < D+\eps$. 

For case (ii), as $(D_n)$ is nonincreasing, again we only need to show that there exists $n \le \omega$ such that $D_n < D+\eps$. Consequently, in the following we treat both cases at once.

Consider the sequence $(E_n)$ defined by 
\[
E_n=\left\{
\begin{array}{ll}
D_n, & \mbox{if } n \le \omega,\\
D, & \mbox{otherwise}.
\end{array}
\right.
\]
This is a nonincreasing sequence in $[0,b]$ and we can apply Lemma \ref{lemma-mon-seq} taking $\tau = \Theta^N(\alpha)$ and the function $g$ constantly equal to $N$. Thus, there exists $I \le N\left\lceil \frac{b}{\Theta^N(\alpha)}\right\rceil$ such that $D\le D_{I+n+1}\le D_{I+n}$ for all $n < N$ and 
\begin{equation} \label{thm-main-eq1}
|D_{I+n} - D_{I+m}| \le \Theta^N(\alpha),
\end{equation}
for all $n,m\in [0,N]$. Here we use the fact that if $n \in [0,N]$, then $I + n \le \omega$, so $E_{I+n} = D_{I+n}$. Assume that for all $n \in [0, N]$, $D_{I+n} \ge D+\eps$. Denoting $\gamma = D_{I+N} - D \ge \eps$, we have
\begin{align*}
D+\gamma & = D_{I+N} \le D_{I+n} \\
& \le D_{I+N} +\Theta^N(\alpha) \quad \text{by }\eqref{thm-main-eq1}\\
& = D + \gamma + \Theta^N(\alpha),
\end{align*}
hence
\begin{equation} \label{thm-main-eq2}
D+\gamma \le D_{I+n} \le D + \gamma + \Theta^N(\alpha),
\end{equation}
for all $n \in [0,N]$.

Denote now $l_n = L_{I+n}$ and $m_n = M_{I+n}$ for $n\in[0,N]$. Then $d(l_n,m_n) = D_{I+n}$.
\begin{claim} For all $n\in [0,N]$,
\begin{enumerate}
\item[(i)]
$d(l_0,l_n)+d(l_n,m_n) \le d(l_0,m_n)+\Theta^{N-n}(\alpha)$,
\item[(ii)] 
$d(l_0,l_n)\ge n(D-\alpha)$.
\end{enumerate}
\end{claim}
\begin{proof}[Proof of Claim]
We use induction. For $n=0$, the two inequalities are obviously true. As our induction hypothesis (I.H.), suppose that (i) and (ii) hold for $n=k \le N-1$. We prove that they also hold for $n=k+1$. We have 
\begin{align*}
d(l_0,l_{k+1})+d(l_{k+1},m_k) & \ge d(l_0,m_k)\\ 
& \ge d(l_0,l_k) +d(l_k,m_k)-\Theta^{N-k}(\alpha) \quad \text{by (i)-I.H.}\\
& = d(l_0,l_k)+d(l_k,l_{k+1})+d(l_{k+1},m_k)-\Theta^{N-k}(\alpha). 
\end{align*}
Hence,
\begin{align*} 
d(l_0,l_{k+1}) & \ge d(l_0,l_k)+d(l_k,l_{k+1})-\Theta^{N-k}(\alpha) \\
& \ge k(D-\alpha)+D-\Theta^{N-k}(\alpha) \quad \text{by (ii)-I.H.}\\
& \ge (k+1)(D-\alpha),
\end{align*}
which proves (ii) for $k+1$. Also, 
\begin{equation}\label{thm-main-claim-eq4}
d(l_0,m_k) \ge d(l_0,l_{k+1}) + d(l_{k+1},m_k)-\Theta^{N-k}(\alpha).
\end{equation}
Next we obtain  
\begin{align*}
d(l_{k+1},m_k)+d(m_k,m_{k+1}) &\le d(l_k,m_k)-D+D = D_{I+k} \le D_{I+N} +\Theta^N(\alpha) \quad \text{by \eqref{thm-main-eq1}}\\
& \le d(l_{k+1},m_{k+1})+\Theta^N(\alpha) \le d(l_{k+1},m_{k+1})+\Theta^{N-k}(\alpha),  
\end{align*}
i.e.
\begin{equation}\label{thm-main-claim-eq7}
d(l_{k+1},m_k)+d(m_k,m_{k+1})\le d(l_{k+1},m_{k+1})+\Theta^{N-k}(\alpha).
\end{equation}
Under the assumption that $\sep\{l_0,l_{k+1},m_k,m_{k+1}\} \ge \alpha$,
relations \eqref{thm-main-claim-eq4} and \eqref{thm-main-claim-eq7} 
imply, using that $\Theta$ is a modulus of uniform betweenness (applied to 
$x:=m_{k+1}$, $y:=m_k$, $z:=l_{k+1}$, $w:=l_0$), that
\[ d(l_0,m_{k+1})\ge d(l_0,l_{k+1})+d(l_{k+1},m_{k+1})-\Theta^{N-k-1}(\alpha),
\] which is (i) for $k+1.$ \\ 
So in order to finish the proof of the claim, 
it remains to verify that $\sep\{l_0,l_{k+1},m_k,m_{k+1}\} \ge \alpha:$
\begin{enumerate}
\item $d(l_0,l_{k+1}) \ge (k+1)(D-\alpha)\ge (k+1)\alpha\ge \alpha$ using 
that we have already proved (ii) for $k+1$ and \eqref{thm-main-eq-eps}.
\item 
$\begin{aligned}[t]d(l_0,m_k) & \ge d(l_0,l_k)+d(l_k,m_k)-\Theta^{N-k}(\alpha) \quad \text{by (i)-I.H.}\\[0.5mm]
& \ge d(l_0,l_k)+d(l_k,m_k)-\alpha \\[0.5mm] 
& \ge k(D-\alpha)+d(l_k,m_k)-\alpha \quad \text{by (ii)-I.H.} \\[0.5mm]
& \ge k(D-\alpha)+D+\gamma-\alpha \\[0.5mm]
& = (k+1)(D-\alpha)+\gamma\ge 3\alpha \quad \text{since, by \eqref{thm-main-eq-eps}, $D\ge 2\alpha$ and $\gamma\ge\eps \ge 2\alpha$}.
\end{aligned}$ 
\item $\begin{aligned}[t]d(l_0,m_{k+1}) & \ge d(l_0,m_k)-d(m_k,m_{k+1}) \ge (k+1)
(D-\alpha)+\gamma-D \\[0.5mm]
& \ge k(D-\alpha)-\alpha+\gamma\ge \alpha.
\end{aligned}$ 
\item $d(l_{k+1},m_k) =D_{I+k}-D\ge \gamma\ge 2\alpha.$ 
\item $d(l_{k+1},m_{k+1}) =D_{I+k+1}\ge D+\gamma\ge 4\alpha.$
\item $\begin{aligned}[t]
d(m_k,m_{k+1}) & \ge d(l_{k+1},m_{k+1})-d(l_{k+1},m_k) \\[0.5mm]
& \ge D+\gamma-(D_{I+k}-D)\ge D+\gamma-\gamma-\Theta^N(\alpha) \quad \text{by \eqref{thm-main-eq2}} \\[0.5mm] 
& \ge D-\alpha\ge \alpha. 
\end{aligned}$  
\end{enumerate}
This ends the proof of the claim. 
\end{proof}
Consequently, 
\begin{align*}
d(l_0,l_N) & \ge N(D-\alpha) \\
& > b+1-N\alpha \quad \text{by }\eqref{eq-diam-A}\\
& \ge b \quad \text{by }\eqref{thm-main-eq-eps},
\end{align*}
a contradiction to the fact that $b \ge \diam(X)$. This shows that there exists $n \le I + N \le \omega$ such that $D_n < D + \eps$. 
\end{proof}

\begin{re}
Instead of assuming $X$ to be bounded it suffices to assume that $(M_n)$ 
is bounded. Indeed, let $B\ge d(M_0,M_n)$ for all $n\in\N.$ Then 
\[ d(M_0,L_n)\le d(M_0,M_n)+d(L_n,M_n)\le B+\max\{ d(L_0,M_0),D\} \] 
and so for all $m,n\in\N$
\[ d(L_n,M_m)\le d(L_n,M_0)+d(M_0,M_m)\le 2B+\max\{ d(L_0,M_0),D\}=:b.\]
Hence we can take this $b$ throughout the proof of the main theorem. In fact,
it suffices to have $d(M_0,M_n)\le B$ for all $n\le\left\lceil \frac{b+1}{D}\right\rceil +1$.
\end{re}

\section{Uniform betweenness in uniquely geodesic spaces}\label{section-UBTW-uniquely-geod} \label{section-UBTW-uniquely-geod}
The goal of this section is to study the relation of the uniform betweenness property with other convexity properties in uniquely geodesic spaces. The main tool will be the concept of uniform uniqueness of geodesics which we define in Subsection \ref{subsection-uniform-uniquely}. We start with an introductory part.

\subsection{Basic notions and concepts}
This section discusses several geometric properties of geodesic metric spaces with emphasis on convexity notions that play an essential role in the study of the uniform betweenness property. We start with a brief account of some basic definitions on geodesic spaces and refer to \cite{Bri99} for a more detailed treatment.

Let $(X,d)$ be a metric space and $x,y \in X$. A {\it geodesic} joining $x$ to $y$ is a mapping $\gamma:[0,l] \subseteq \R \to X$ such that $\gamma(0)=x$, $\gamma(l)=y$ and 
\[d(\gamma(s),\gamma(s'))=|s-s'| \quad \text{for all }s,s'\in[0,l].\] 
It follows that $l=d(x,y)$. We say that a geodesic $\gamma$ {\it starts} at $x$ if $\gamma(0) = x$. If every two points in $X$ are joined by a (unique) geodesic, then $X$ is called a {\it (uniquely) geodesic space}. The image $\gamma([0,l])$ of a geodesic $\gamma$ is called a {\it geodesic segment} with endpoints $x$ and $y$. Suppose $X$ is a geodesic space. A point $z\in X$ belongs to a geodesic segment with endpoints $x$ and $y$ if and only if there exists $t\in [0,1]$ such that 
\[d(z,x)= td(x,y) \quad \text{and} \quad d(z,y)=(1-t)d(x,y).\] 
In this case, if $\gamma$ is the geodesic in question, then $z=\gamma(tl)$. When $t=1/2$, we call such a point $z$ a {\it midpoint} of $x$ and $y$ and also denote it by $m(x,y)$. Two given points $x$ and $y$ in $X$ may be joined by more than one geodesic and thus may have more than one midpoint. If there is a unique geodesic segment with endpoints $x$ and $y$, we denote it by $[x,y]$ and in this case for all $t\in [0,1]$ there exists only one point $z \in X$, denoted by $(1-t)x + ty$, satisfying $d(z,x)= td(x,y)$ and $d(z,y)=(1-t)d(x,y)$. In particular, $x$ and $y$ have a unique midpoint $m(x,y) = (1/2)x+(1/2)y$.
\begin{de}\label{def-str-conv}
Let $(X,d)$ be a geodesic space. We say that $X$ is {\it strictly convex} if for all $z,x,y \in X$ with $x \ne y$ and all midpoints $m(x,y)$ of $x$ and $y$ we have
\[d(z, m(x,y)) < \max\{d(z,x),d(z,y)\}.\]
\end{de}
Strictly convex geodesic spaces are uniquely geodesic. Indeed, let $\gamma_1$ and $\gamma_2$ be two geodesics joining $x$ to $y$. Denote $u_s=\gamma_1(s)$ and $v_s=\gamma_2(s)$, where $s \in [0,d(x,y)]$. If $u_s \ne v_s$, then taking any midpoint $m(u_s,v_s)$ it follows that
\begin{align*}
d(x,y) & \le d(x,m(u_s,v_s)) + d(y,m(u_s,v_s)) \\
& < \max\{d(x,u_s),d(x,v_s)\} + \max\{d(y,u_s),d(y,v_s)\} = d(x,y),
\end{align*}
a contradiction. Hence $u_s=v_s$ for any $s \in [0,d(x,y)]$, which shows that $\gamma_1=\gamma_2$. This also shows that in Definition \ref{def-str-conv} one can equivalently consider some midpoint of $x$ and $y$ instead of all midpoints as this is enough to prove the uniqueness of geodesics.

Any normed vector space is a geodesic space. For this class of spaces, strict convexity is actually equivalent to the existence of unique geodesics between any two points. However, in general this equivalence fails to hold as the following example shows.
\begin{ex} \label{ex-sphere}
The {\it $2$-dimensional sphere $\mathbb{S}^2$} is the set $\left\{u \in {\R}^{3} : (u \mid u) = 1\right\}$, where $(\cdot\mid \cdot)$ is the Euclidean scalar product. Endowed with the distance $d: \mathbb{S}^2 \times \mathbb{S}^2 \to \R$ that assigns to each $(x,y) \in \mathbb{S}^2 \times \mathbb{S}^2$ the unique number $d(x,y) \in [0,\pi]$ such that $\cos d(x,y) = (x\mid y)$, $\mathbb{S}^2$ is a geodesic space called the spherical space. Any octant of $\mathbb{S}^2$ is a uniquely geodesic space that is not strictly convex.
\end{ex}

Uniform convexity is a strengthening of strict convexity and was first introduced in the linear case in \cite{Cla36} and in a nonlinear setting in \cite{GoeSekSta80,GoeRei84,Leu07}. Since then it was used in various forms in metric spaces and we consider here the following variant from \cite{Leu07}. 

\begin{de} \label{def-UC}
A geodesic space $(X,d)$ is {\it uniformly convex} if for all $\eps \in (0,2]$ and $r>0$ there exists $\delta \in (0,1]$ such that for all $z, x, y\in X$ and all midpoints $m(x,y)$
we have 
\begin{eqnarray}\nonumber
\left.\begin{array}{l}
d(z,x)\le r\\
d(z,y)\le r\\
d(x,y)\ge\varepsilon r
\end{array}
\right\}
& \Rightarrow & d(z,m(x,y))\le (1-\delta)r.
\end{eqnarray}
A mapping $\eta : (0,2] \times (0,\infty) \to (0,1]$ providing for given $r > 0$ and $\eps \in (0, 2]$ such a $\delta = \eta(\eps,r)$ is called {\it a modulus of uniform convexity}. A modulus of uniform convexity is said to be {\it monotone} if it is nonincreasing in the second argument.
\end{de}

Every uniformly convex geodesic space is strictly convex, hence uniquely geodesic. Again, uniform convexity is in fact equivalent to the condition obtained by considering the above implication for some midpoint of $x$ and $y$ instead of all midpoints. Besides, one can show that every compact strictly convex geodesic space is uniformly convex.

In normed vector spaces that are uniformly convex in the sense of Definition \ref{def-UC}, by rescaling balls, one can always find moduli of uniform convexity that do not depend on the second argument, namely on the radii $r$. In fact, one usually considers the notion of {\it the modulus of convexity} of a normed vector space $X$ defined as the function $\delta : [0,2] \to [0,1]$ given by
\[\delta(\eps) = \inf \left\{ 1 - \left\|\frac{x+y}{2}\right\| : \|x\| \le 1, \|y\| \le 1, \|x-y\| \ge \eps\right\},\]
or equivalently,
\[\delta(\eps) = \inf \left\{ 1 - \left\|\frac{x+y}{2}\right\| : \|x\| = 1, \|y\| = 1, \|x-y\| \ge \eps\right\}.\]
Note that $\delta$ is nondecreasing on $[0,2]$ and continuous on $[0,2)$. The normed vector space $X$ is uniformly convex (in the sense of Definition \ref{def-UC} and equivalently in the sense of \cite{Cla36}) if and only if $\delta(\eps) > 0$ for all $\eps > 0$. In this case $\delta$ is the largest possible modulus of uniform convexity one can define for $X$. 

For $1 < p < \infty$, an $L_p$ space over a measurable space is uniformly convex and, if $\delta$ is its modulus of convexity and $\eta : (0,2] \to (0,1]$ is defined by
\begin{equation}\label{eq-mod-Lp}
\eta(\eps)=\left\{
\begin{array}{ll}
\displaystyle\frac{p-1}{8}\eps^2, & \mbox{if } 1 < p \le 2,\\
\displaystyle\frac{1}{p2^p}\eps^p, & \mbox{if } 2 < p < \infty,
\end{array}
\right.
\end{equation}
then $\delta(\eps) \ge \eta(\eps)$ for all $\eps \in (0,2]$. Hence, $\eta$ is a modulus of uniform convexity for $L_p$.

A related notion is {\it the characteristic of convexity} of a normed vector space defined as the number
\[\eps_0 = \sup\{\eps \in [0,2]: \delta(\eps) = 0\}.\]
Then $X$ is uniformly convex if and only if $\eps_0 = 0$. In addition, $\delta$ is strictly increasing on $[\eps_0,2]$. These concepts and proofs of the properties mentioned above can be found e.g. in \cite[Chapter 5]{GK90}.

A particular notion of uniform convexity, called $p$-uniformly convexity, was introduced by Ball, Carlen and Lieb \cite{BalCarLie94} in the linear case and, more recently, in the setting of geodesic spaces by Naor and Silberman \cite{NS11} in the following way: given $1<p<\infty$, a geodesic space $(X,d)$ is {\it $p$-uniformly convex} if there exists a parameter $c>0$ such that for all $x,y,z\in X$, all $t\in[0,1]$ and all geodesics $\gamma$ joining $x$ to $y$,
\begin{equation}\label{eq-p-UC}
d(z,\gamma(td(x,y)))^p\leq(1-t)\,d(z,x)^p+t\,d(z,y)^p-\frac{c}{2}\,t(1-t)\,d(x,y)^p.
\end{equation}
Thus, a geodesic space that is $p$-uniformly convex with parameter $c>0$ is uniformly convex (in the sense of Definition \ref{def-UC}) and admits a modulus of uniform convexity that does not depend on the second argument
\begin{equation} \label{eq-mod-p-UC}
\eta(\eps) = \frac{c}{8p} \eps^p.
\end{equation}
Estimations on $c$ depending on the value of $p$ were give in \cite{Kuw14}. Namely, $c \le 2(p-1)$ if $p \in (1,2)$ and $c \le 8/2^p$ if $p \in [2,\infty)$.

Every $L_p$ space over a measurable space is $p$-uniformly convex if $p > 2$ and $2$-uniformly convex if $p \in (1,2]$. As for geodesic spaces, every $\CAT(0)$ space is $2$-uniformly convex with parameter $c=2$ and, in this case, (\ref{eq-p-UC}) provides a characterization of $\CAT(0)$ spaces. For $\kappa>0$, any $\CAT(\kappa)$ space $X$ with $\diam(X)<\pi/(2\sqrt{\kappa})$ is $2$-uniformly convex with parameter $c=(\pi-2\sqrt{\kappa}\,\eps)\,\tan(\sqrt{\kappa}\,\eps)$ for any $0<\eps\leq\pi/(2\sqrt{\kappa})-\diam(X)$, see \cite{Oht07}. We remark at this point that $\CAT(\kappa)$ spaces are defined in terms of comparisons with the model planes i.e. the complete simply connected $2$-dimensional Riemannian manifolds of constant sectional curvature $\kappa$. More precisely, in $\CAT(\kappa)$ spaces, geodesic triangles (which consist of three points and three geodesic segments joining them) are `thin' when compared to triangles with the same side lengths in the model planes. Note also that a normed real vector space which is $\CAT(\kappa)$ for some $\kappa \in \R$ is pre-Hilbert. A comprehensive exposition of $\CAT(\kappa)$ spaces can be found in \cite{Bri99}.
\subsection{Uniform uniquely geodesic spaces} \label{subsection-uniform-uniquely}
Let $(X,d)$ be a geodesic space. As we pointed out in the introduction, we need a quantitative uniform version of the property that there exists exactly one geodesic joining two points in $X$, and we define it next. Note that every geodesic space that satisfies the condition from below is uniquely geodesic. 

\begin{de}\label{def-UU}
We say that $X$ is {\it uniformly uniquely geodesic} if for all $\eps, b > 0$ there exists $\varphi > 0$ such that for all $x, y, z, w\in X$ with $d(x,y) \le b$ and all $t\in [0,1]$ we have
\begin{eqnarray}\nonumber
\left.\begin{array}{l}
\max\{d(z,x), d(w,x)\} \le td(x,y)\\
\max\{d(z,y), d(w,y)\} \le (1-t)d(x,y) + \varphi\\
\end{array}
\right\}
& \Rightarrow & d(z,w) \le \eps.
\end{eqnarray}
A mapping $\Phi : (0,\infty)^2\to (0,\infty)$ providing for given $\eps, b > 0$ such a $\varphi = \Phi(\eps,b)$ is called {\it a modulus of uniform uniqueness}.
\end{de}
A somehow related property for CAT$(0)$ spaces can be found in \cite[Lemma 17.20]{K08} and \cite[Chapter II, Lemma 9.15]{Bri99}.

\begin{pro} \label{prop-compact-uniq-geod}
Compact uniquely geodesic spaces are uniformly uniquely geodesic. 
\end{pro}
\begin{proof}
We argue by contradiction. Suppose that $(X,d)$ is compact and uniquely geodesic, but not uniformly uniquely geodesic. Then there exists $\eps > 0$ such that for all $n \in \mathbb{N}$ we can find points $x_n, y_n, z_n, w_n \in X$ and numbers $t_n \in [0,1]$ satisfying 
\[\max\{d(z_n,x_n), d(w_n,x_n)\} \le t_nd(x_n,y_n), \quad \max\{d(z_n,y_n), d(w_n,y_n)\} \le (1-t_n)d(x_n,y_n) + \frac{1}{n}\]
and 
\begin{equation} \label{eq1-cmp-uu}
d(z_n, w_n) > \eps. 
\end{equation}
By compactness, we may assume that there exist $x,y,z,w \in X$ and $t \in [0,1]$ such that $x_n \to x$, $y_n \to y$, $z_n \to z$, $w_n \to w$ and $t_n \to t$. Then
\[d(x,y) \le d(z,x) + d(z,y) \le td(x,y) + (1-t)d(x,y) = d(x,y),\]
from where $z \in [x,y]$ with $d(z,x) = td(x,y)$. In the same way, $w \in [x,y]$ with $d(w,x) = td(x,y)$, which shows that $z=w$. This contradicts \eqref{eq1-cmp-uu}.
\end{proof}

In normed vector spaces that are uniformly uniquely geodesic, by rescaling balls, it is enough to define $\Phi(\cdot, 1)$ in order to obtain a modulus of uniform uniqueness: one can take $\Phi(\eps,b) = b\Phi(\eps/b,1)$ for all $\eps, b > 0$. 

We show next that uniform convexity with a monotone modulus of uniform convexity $\eta$ implies uniform uniqueness of geodesics and one can define a modulus of uniform uniqueness in terms of $\eta$.

\begin{thm} \label{thm-UC-UU}
Let $(X,d)$ be a uniformly convex geodesic space that admits a monotone modulus of uniform convexity $\eta$. Then $X$ is uniformly uniquely geodesic and $\Phi : (0,\infty)^2 \to (0,\infty)$ defined by
\[\Phi(\eps,b) = \frac{\eps}{2} \eta\left(\frac{\eps}{b+\eps/2}, b+\eps/2\right)\] 
is a modulus of uniform uniqueness for $X$. 

In addition, if $\eta$ can be written as $\eta(\eps,r) = \eps\tilde{\eta}(\eps,r)$, where $\tilde{\eta}: (0,2] \times (0,\infty) \to (0,1]$ is nondecreasing in $\eps$, then one can take
\[\Phi(\eps,b) = \eps \tilde{\eta}\left(\frac{\eps}{b+\eps}, b+\eps\right).\] 
\end{thm}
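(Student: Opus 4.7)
The plan is to argue by contradiction: suppose $\dist(z,[x,y])\ge\eps$ and derive a contradiction by applying uniform convexity to the midpoint of $z$ and a carefully chosen point on $[x,y]$.

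First, I would set $r_1':=d(x,z)\le r_1$ and $r_2':=d(y,z)\le r_2+\varphi$, and pick $w\in[x,y]$ with $d(x,w)=r_1'$ (which exists since $r_1'\le r_1+r_2\le d(x,y)$); then $d(y,w)=d(x,y)-r_1'\le r_2'$ by the triangle inequality applied to $x,y,z$. The contradiction hypothesis forces $d(z,w)\ge\eps$, while $d(z,w)\le d(z,x)+d(x,w)=2r_1'$ and $d(z,w)\le d(z,y)+d(y,w)\le 2r_2'$, so $r_1',r_2'\ge\eps/2$. Since $\eta\le 1$ one also has $\varphi\le\eps$, whence $r_1',r_2'\le b+\eps$. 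Setting $m:=m(z,w)$ and invoking uniform convexity at $x$ with radius $r_1'$ and at $y$ with radius $r_2'$ (both valid because $\eps/r_i'\le 2$) yields
\[
d(x,m)+d(y,m)\le r_1'+r_2'-r_1'\,\eta(\eps/r_1',r_1')-r_2'\,\eta(\eps/r_2',r_2').
\]
Combining this with $d(x,y)\le d(x,m)+d(m,y)$, the hypothesis $d(x,y)\ge r_1+r_2$, and $r_1'+r_2'\le r_1+r_2+\varphi$, the $r_1+r_2$ terms cancel to give the central inequality
\[
r_1'\,\eta(\eps/r_1',r_1')+r_2'\,\eta(\eps/r_2',r_2')\le\varphi.
\]

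For the second formula the factor $\eps$ in $\eta=\eps\,\tilde\eta$ does the work directly: each summand on the left equals $\eps\,\tilde\eta(\eps/r_i',r_i')$, and by monotonicity of $\tilde\eta$ (nondecreasing in the first argument by hypothesis, nonincreasing in the second because $\eta$ is, together with $r_i'\le b+\eps$) this is at least $\eps\,\tilde\eta(\eps/(b+\eps),b+\eps)=\varphi$. Summing gives $2\varphi\le\varphi$, a contradiction.

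For the first formula I would first observe that one may assume without loss of generality that $\eta$ itself is nondecreasing in its first argument, because replacing $\eta(\eps,r)$ by $\sup_{\eps''\le\eps}\eta(\eps'',r)$ yields a valid modulus with the same monotonicity in $r$, and any pointwise smaller positive candidate for $\Phi$ is automatically a valid modulus of uniform uniqueness. Then the same monotonicity chain gives $\eta(\eps/r_i',r_i')\ge\varphi/\eps$, so the central inequality forces $r_1'+r_2'\le\eps$; hence $r_1+r_2\le d(x,y)\le r_1'+r_2'\le\eps$. But the contradiction hypothesis, applied to $x\in[x,y]$, gives $r_1\ge d(z,x)\ge\eps$, which together with $r_2>0$ is the desired contradiction. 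I expect the main obstacle to be precisely this monotonicity issue in the first argument of $\eta$—the WLOG reduction that is needed to bound $\eta(\eps/r_i',r_i')$ from below uniformly over $r_1',r_2'\in[\eps/2,b+\eps]$.
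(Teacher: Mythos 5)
Your proof is correct, and while it shares the paper's overall skeleton (argue by contradiction, pick a point $w$ on $[x,y]$, apply uniform convexity twice --- once centered at $x$, once at $y$ --- to the midpoint of $z$ and $w$, then sum the two estimates against $d(x,y)\ge r_1+r_2$), it resolves the crucial technical obstacle by a genuinely different device. The paper never needs monotonicity of $\eta$ in its first argument: it takes $z'\in[x,y]$ with $d(x,z')=r_1$ (the given bound, not $d(x,z)$) and applies uniform convexity at $y$ with radius $r_2+\varphi$ but with the deliberately \emph{smaller} ratio $\eps/(b+\eps)$ in place of the natural one --- legitimate because $d(z,z')\ge\eps\ge\frac{\eps}{b+\eps}(r_2+\varphi)$ --- so that second-argument monotonicity alone, together with $r_2+\varphi\ge\eps$, turns that term into exactly $\eps\,\eta\left(\frac{\eps}{b+\eps},b+\eps\right)=\varphi$, while the term at $x$ is used only through its strict positivity. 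You instead use the natural ratios $\eps/r_i'$ at both centers, which forces you to bound $\eta$ from below over varying first arguments; your fix --- replacing $\eta$ by $\sup_{\eps''\le\eps}\eta(\eps'',r)$, checking that this is still a monotone modulus of uniform convexity, and observing that any pointwise smaller positive function than a modulus of uniform uniqueness is again such a modulus --- is sound, and both observations are correct. Your route is more symmetric and isolates a useful structural fact (downward closedness of moduli of uniform uniqueness); the paper's choice of ratio is more economical, proving the stated $\Phi$ directly from the original $\eta$ without any regularization layer.

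One small gap to patch: in the ``in addition'' case you still invoke $\varphi\le\eps$ (needed to get $r_1',r_2'\le b+\eps$), but there $\varphi=\eps\,\tilde{\eta}\left(\frac{\eps}{b+\eps},b+\eps\right)$ and the justification ``since $\eta\le 1$'' only yields $\tilde{\eta}(\eps',r)\le 1/\eps'$, hence $\varphi\le b+\eps$, which is not enough. The fix is one line: since $\tilde{\eta}$ is nondecreasing in its first argument, $\tilde{\eta}\left(\frac{\eps}{b+\eps},b+\eps\right)\le\tilde{\eta}(2,b+\eps)=\eta(2,b+\eps)/2\le 1/2$, so in fact $\varphi\le\eps/2$ in that case as well.
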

\begin{proof}
Let $\eps,b > 0$ and denote 
\[\varphi = \frac{\eps}{2} \eta\left(\frac{\eps}{b+\eps/2}, b+\eps/2\right) \le \frac{\eps}{2}.\] 

Take $x,y,z,w\in X$ with $d(x,y) \le b$ and $t \in [0,1]$ satisfying 
\[\max\{d(z,x), d(w,x)\} \le td(x,y) \quad \text{and} \quad \max\{d(z,y), d(w,y)\} \le (1-t)d(x,y) + \varphi.\]
We need to show that $d(z,w) \le \eps$. Suppose, on the contrary, that $d(z,w) > \eps$. In this case, denoting $r_1 = td(x,y)$ and $r_2 = (1-t)d(x,y)$, we have
\[\eps < d(z,w) \le d(z,y) + d(w,y) \le 2(r_2 + \varphi),\] 
so 
\begin{equation} \label{thm-UC-UU-eq1}
r_2 + \varphi \ge \frac{\eps}{2}.
\end{equation} 
We may assume that $r_1 > 0$. As $\max\{d(z,x), d(w,x)\} \le r_1$ and
\[d(z,w) > \eps = \frac{\eps}{r_1}r_1,\] 
by uniform convexity, 
\begin{equation} \label{thm-UC-UU-eq2}
d\left(x,m(z,w)\right)\le \left(1-\eta\left(\frac{\varepsilon}{r_1},r_1\right)\right)r_1.
\end{equation}
At the same time, since $\max\{d(z,y), d(w,y)\} \le r_2 + \varphi$ and
\[d(z,w) > \eps \ge \frac{\eps}{b+\eps/2}(r_2+\varphi),\] 
by uniform convexity, 
\begin{equation} \label{thm-UC-UU-eq3}
d\left(y,m(z,w)\right)\le \left(1-\eta\left(\frac{\varepsilon}{b+\eps/2},r_2 + \varphi\right)\right)(r_2 + \varphi).
\end{equation}
Hence,
\begin{align*}
r_1 + r_2 & = d(x,y) \le d(x,m(z,w)) + d(y,m(z,w))\\
& \le \left(1-\eta\left(\frac{\eps}{r_1},r_1\right)\right)r_1 + \left(1-\eta\left(\frac{\eps}{b+\eps/2},r_2 + \varphi\right)\right)(r_2 + \varphi) \quad \text{by } \eqref{thm-UC-UU-eq2} \text{ and }\eqref{thm-UC-UU-eq3}\\
& \le r_1 + r_2 - r_1\eta\left(\frac{\eps}{r_1},r_1\right) + \varphi - \frac{\eps}{2}\eta\left(\frac{\eps}{b+\eps/2},r_2 + \varphi\right) \quad \text{by }\eqref{thm-UC-UU-eq1}.
\end{align*} 
Using the monotonicity of $\eta$ we obtain
\[0 < r_1\eta\left(\frac{\varepsilon}{r_1 },r_1\right)\le \varphi - \frac{\eps}{2} \eta\left(\frac{\eps}{b+\eps/2}, b+\eps/2\right),\]
a contradiction.

Suppose now $\eta(\eps,r) = \eps\tilde{\eta}(\eps,r)$ with $\tilde{\eta}$ nondecreasing in $\eps$. For $\eps,b > 0$, let
\[\varphi = \eps \tilde{\eta}\left(\frac{\eps}{b+\eps}, b+\eps\right) \le \eps.\] 
As $\max\{d(z,y), d(w,y)\} \le r_2 + \varphi$ and 
\[d(z,w) > \eps = \frac{\eps}{r_2+\varphi}(r_2+\varphi),\] 
by uniform convexity and the monotonicity of $\eta$ we have
\begin{align*}
d\left(y,m(z,w)\right)&\le \left(1-\eta\left(\frac{\varepsilon}{r_2 + \varphi},r_2 + \varphi\right)\right)(r_2 + \varphi)\\
& \le \left(1-\eta\left(\frac{\varepsilon}{r_2 + \varphi},b+\eps\right)\right)(r_2 + \varphi)\\
& = \left(1-\frac{\varepsilon}{r_2 + \varphi}\tilde{\eta}\left(\frac{\varepsilon}{r_2 + \varphi},b+\eps\right)\right)(r_2+\varphi).
\end{align*}
Using the monotonicity of $\tilde{\eta}$ we obtain
\begin{equation} \label{thm-UC-UU-eq4}
d\left(y,m(z,w)\right) \le \left(1-\frac{\varepsilon}{r_2 + \varphi}\tilde{\eta}\left(\frac{\varepsilon}{b+\eps},b+\eps\right)\right)(r_2+\varphi). 
\end{equation}
The same reasoning as before applying now \eqref{thm-UC-UU-eq4} instead of \eqref{thm-UC-UU-eq3} finishes the proof.
\end{proof}

In particular, using \eqref{eq-mod-Lp}, for $1 < p < \infty$, $L_p$ spaces over measurable spaces admit a modulus of uniform uniqueness $\Phi : (0,\infty)^2\to (0,\infty)$ defined by
\begin{equation} \label{eq-mod-uniq-Lp}
\Phi(\eps,b)=\left\{
\begin{array}{ll}
\displaystyle\frac{p-1}{8}\frac{\eps^2}{(b+\eps)}, & \mbox{if } 1 < p \le 2,\\
\displaystyle\frac{1}{p2^p}\frac{\eps^{p}}{(b+\eps)^{p-1}}, & \mbox{if } 2 < p < \infty.
\end{array}
\right.
\end{equation}
If $X$ is $p$-uniformly convex with parameter $c$, then, according to \eqref{eq-mod-p-UC},
\begin{equation} \label{eq-mod-uniq-p-UC}
\Phi(\eps,b) = \frac{c}{8p}\frac{\eps^p}{(b+\eps)^{p-1}}, 
\end{equation}
acts as a modulus of uniform uniqueness for $X$. 

Revisiting Example \ref{ex-sphere} we can immediately notice that there exist uniformly uniquely geodesic spaces that are not uniformly convex. Indeed, any octant of ${\mathbb{S}}^2$ is a compact uniquely geodesic space, thus, by Proposition \ref{prop-compact-uniq-geod}, it is uniformly uniquely geodesic. However, it is not strictly convex, and hence not uniformly convex. 

On the other hand, recall that in normed vector spaces, strict convexity is equivalent to uniqueness of geodesics. This equivalence still holds when passing to the uniform versions of these properties. Namely, Theorem \ref{thm-UC-UU} and 
Theorem \ref{thm-normed-UU-UC}  below show that in normed vector spaces 
uniform uniqueness of 
geodesics is equivalent to uniform convexity, and respective moduli can be expressed in terms of each other. Before proving Theorem \ref{thm-normed-UU-UC}, we recall the following property of the modulus of convexity. Its proof can be found e.g. in \cite[p. 56]{GK90}, but since it is short, for completeness we include it below.

\begin{lem}[Goebel and Kirk \cite{GK90}] \label{lemma-mod-conv}
Let $(X,\|\cdot\|)$ be a normed vector space with modulus of convexity $\delta$ and characteristic of convexity $\eps_0$. If $\eps_0 < 2$, then 
\[\delta(2(1-\delta(\eps))) \le 1 - \frac{\eps}{2},\]
for all $\eps \in (\eps_0,2]$. 
\end{lem}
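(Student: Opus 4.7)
The plan is to dualise both occurrences of $\delta$ in the inequality $\delta(2(1-\delta(\eps))) \le 1-\eps/2$, and then to build, by a one-line symmetry trick, a pair of unit vectors that witnesses the desired bound from the defining infimum of $\delta$ at the point $2(1-\delta(\eps))$. The only delicate point is pushing an approximating parameter $\nu \to 0^{+}$ through $\delta$, which is exactly where the hypothesis $\eps > \eps_0$ and the continuity of $\delta$ on $[0,2)$ (both recalled just before the lemma) enter.

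First I would rewrite the two modulus values as infima/suprema. By the definition of $\delta(\eps)$, the quantity $\sup\{\|(x+y)/2\| : \|x\|=\|y\|=1,\ \|x-y\| \ge \eps\}$ equals $1-\delta(\eps)$, so for every $\nu > 0$ there exist unit vectors $x,y$ with $\|x-y\| \ge \eps$ and
\[
\left\|\tfrac{x+y}{2}\right\| \ge 1-\delta(\eps)-\nu.
\]
On the other hand, showing $\delta(t) \le 1-s$ for some $t \in (0,2]$ and $s \in [0,1]$ reduces to exhibiting unit vectors $u_1,u_2$ with $\|u_1-u_2\| \ge t$ and $\|(u_1+u_2)/2\| \ge s$. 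With $t = 2(1-\delta(\eps))-2\nu$ and $s = \eps/2$, the lemma will follow once, for each small $\nu > 0$, I produce unit vectors $u_1,u_2$ with $\|u_1-u_2\| \ge 2(1-\delta(\eps))-2\nu$ and $\|(u_1+u_2)/2\| \ge \eps/2$.

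The natural candidate is the symmetry $u_1 := x$, $u_2 := -y$, using the $x,y$ chosen above. Both $u_1,u_2$ are unit vectors, and a direct computation yields $u_1-u_2 = x+y$ and $u_1+u_2 = x-y$, so the two required bounds become, respectively, $\|x+y\| \ge 2(1-\delta(\eps))-2\nu$ and $\|x-y\|/2 \ge \eps/2$, both of which were built into the choice of $x,y$. This gives $\delta(2(1-\delta(\eps))-2\nu) \le 1-\eps/2$ for every sufficiently small $\nu > 0$.

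It remains to let $\nu \to 0^{+}$. Because $\eps > \eps_0$ forces $\delta(\eps) > 0$, the target point $t_0 := 2(1-\delta(\eps))$ lies strictly inside $[0,2)$, where $\delta$ is continuous, so the approximating inequality passes to the limit and yields $\delta(t_0) \le 1-\eps/2$. I do not expect any substantive obstacle; the only step to handle carefully is this final continuity argument, which is precisely why the hypothesis $\eps_0 < 2$ (applied at a point $\eps > \eps_0$) is imposed rather than the weaker $\eps_0 \le 2$.
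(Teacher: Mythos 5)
Your proof is correct and is essentially the paper's own argument: the same near-optimal pair $(x,y)$ extracted from the defining infimum, the same symmetry substitution $u_1 = x$, $u_2 = -y$ (the paper phrases this as applying the defining inequality of $\delta$ to the pair $x,-y$ together with the monotonicity of $\delta$), and the same passage to the limit $\nu \searrow 0$ justified by the continuity of $\delta$ on $[0,2)$, which is available because $\delta(\eps) > 0$ forces $2(1-\delta(\eps)) < 2$. The one point the paper treats separately and you elide is the degenerate case $\delta(\eps) = 1$ (possible only for $\eps = 2$), where your approximating arguments $2(1-\delta(\eps)) - 2\nu$ are negative and hence outside the domain of $\delta$; there the claimed inequality reads $\delta(0) \le 1 - \eps/2$ and holds trivially.
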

\begin{proof}
Let $\eps \in (\eps_0,2]$. Clearly, if $\delta(\eps) = 1$ (which can only happen for $\eps = 2$), then the inequality holds. Moreover, $\delta(\eps) > 0$ and so we can assume that $\delta(\eps) \in (0,1)$. Let $\tau \in (0,1-\delta(\eps))$ and take $x, y \in X$ with 
\[\|x\|=\|y\|=1, \quad \|x-y\|\ge \eps, \quad \text{and} \quad \left\|\frac{x+y}{2}\right\| \ge 1 - \delta(\eps) - \tau.\] 
As $\delta$ is nondecreasing, we get $\delta(\|x+y\|) \ge \delta(2(1-\delta(\eps)-\tau))$. Furthermore,
\[\frac{\eps}{2} \le \frac{\|x-y\|}{2} = \frac{\|x+(-y)\|}{2} \le 1 - \delta(\|x-(-y)\|) = 1 - \delta(\|x+y\|).\]
Hence, 
\[\delta(2(1-\delta(\eps)-\tau)) \le 1 - \frac{\eps}{2}\]
and we only need to let $\tau \searrow 0$ to obtain the desired inequality.
\end{proof}

\begin{thm}\label{thm-normed-UU-UC}
Let $(X,\|\cdot\|)$ be a normed vector space that is uniformly uniquely geodesic with a modulus of uniform uniqueness $\Phi$ satisfying $\Phi < 1$. Then $X$ is uniformly convex and its modulus of convexity $\delta$ can be estimated by 
\[\delta(\eps) \ge \frac{1}{2}\Phi\left(\frac{\eps}{3},1\right),\] 
for all $\eps \in (0,2]$. In particular, $\eta:(0,2] \rightarrow (0,1]$ defined by 
\[\eta(\eps) = \frac{1}{2}\Phi\left(\frac{\eps}{3},1\right)\] 
is a modulus of uniform convexity for $X$.
\end{thm}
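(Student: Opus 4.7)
The plan is to argue by contradiction: suppose $\delta(\eps) < \tfrac{1}{2}\Phi(\eps/3,1)$ for some $\eps \in (0,2]$, and set $\varphi := \Phi(\eps/3,1)$, noting $\varphi < 1$ by hypothesis. From the definition of $\delta$ as an infimum one can choose unit vectors $x, y \in X$ with $\|x-y\| \ge \eps$ and $1 - \|(x+y)/2\|$ close enough to $\delta(\eps)$ that $\|x+y\| > 2 - \varphi$ strictly.

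The crucial move is to apply the uniform uniqueness hypothesis to the straight segment $[x,-y]$ using the origin as reference point. Since $\|0-x\| = \|0-(-y)\| = 1$ and $\|x-(-y)\| = \|x+y\|$, setting $r_1 := 1$ and $r_2 := \|x+y\| - 1$ gives $r_1, r_2 \in (0,1]$ (the lower bound $r_2 > 0$ uses $\varphi < 1$) and all three premises of Definition \ref{def-UU} hold: $d(0,x) \le r_1$, $d(0,-y) \le r_2 + \varphi$ (equivalent to the assumption $\|x+y\| > 2 - \varphi$), and $d(x,-y) = r_1 + r_2$. The modulus $\Phi$ evaluated at $(\eps/3, 1)$ therefore produces a point $p \in [x,-y]$ with $\|p\| < \eps/3$.

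To derive the contradiction, write $p = (1-t)x - ty$ for some $t \in [0,1]$. Since $[x,-y]$ is a straight segment in the normed space, $\|p-x\| = t\|x+y\|$ and $\|p+y\| = (1-t)\|x+y\|$, and the reverse triangle inequality combined with $\|x\| = \|y\| = 1$ bounds each below by $1 - \|p\|$, forcing $|2t-1|\|x+y\| \le \|x+y\| - 2 + 2\|p\|$. Combining this bound with the identity $(x-y)/2 = p + (2t-1)(x+y)/2$ via the triangle inequality produces an upper bound on $\|x-y\|$ linear in $\|p\|$ and in the deficit $2-\|x+y\|$, which contradicts $\|x-y\| \ge \eps$. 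The main obstacle I anticipate is the quantitative matching: the most direct chain of estimates naturally yields $\|x-y\| \le 4\|p\|$ once one applies $\|x+y\| \le 2$, so recovering the stated factor $\eps/3$ (rather than a naive $\eps/4$) requires careful exploitation of the strict inequality $\|x+y\| > 2 - \varphi$ inside the final estimate and not merely in the setup. The rescaling identity $\Phi(\eps,b) = b\Phi(\eps/b,1)$ on normed spaces, recorded earlier in the section, ensures it suffices to use $b = 1$ throughout.
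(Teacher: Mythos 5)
Your route is genuinely different from the paper's, and it is sound up to the very last estimate. The paper proceeds in three stages through the characteristic of convexity ($\eps_0<2$, then $\eps_0=0$, then the estimate), each time manufacturing a near-diametral chord $[x,y]$ via Lemma \ref{lemma-mod-conv} and applying uniform uniqueness to $[x,y]$ with reference point $0$; you instead take the near-minimizing pair for $\delta(\eps)$ directly and apply uniform uniqueness to the \emph{reflected} segment $[x,-y]$, for which the premise $\|x-(-y)\|=\|x+y\|>2-\varphi$ is exactly the statement that $(x,y)$ nearly violates convexity. This reflection trick eliminates any need for Lemma \ref{lemma-mod-conv}, for the continuity of $\delta$, and for the preliminary stages. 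Your verification of the premises of Definition \ref{def-UU} (with $r_1=1$, $r_2=\|x+y\|-1\in(0,1]$), the bound $|2t-1|\,\|x+y\|\le\|x+y\|-2+2\|p\|$, and the identity $(x-y)/2=p+(2t-1)(x+y)/2$ are all correct.

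The gap is precisely the one you flagged, and it cannot be repaired in the way you propose. Your estimates give $\|x-y\|\le 4\|p\|-(2-\|x+y\|)\le 4\|p\|<4\eps/3$, which does not contradict $\|x-y\|\ge\eps$. Exploiting $\|x+y\|>2-\varphi$ cannot close this: that inequality bounds the deficit $2-\|x+y\|$ from \emph{above} by $\varphi$, whereas your final estimate would profit only from a \emph{lower} bound on the deficit of size $\eps/3$, and no such bound exists --- the witnessing pair may have $\|x+y\|$ arbitrarily close to $2$ (this is exactly the regime where $\delta(\eps)$ is small, i.e.\ the case under consideration), leaving only $\|x-y\|\le 4\|p\|$. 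Even choosing witnesses whose deficit is as large as possible (just below $\varphi$) only yields $\|x-y\|<4\eps/3-\varphi$, which beats $\eps$ only if $\varphi\ge\eps/3$, unavailable for any reasonable modulus (the paper's standing regime elsewhere is $\Phi(\eps,b)<\eps$). Moreover, the factor $4$ is intrinsic to the facts you use: in $(\R^2,\|\cdot\|_\infty)$, $x=(1,1)$ and $y=(1,1-\nu)$ give $\|x\|=\|y\|=1$, $\|x+y\|=2$, $\|x-y\|=\nu$, yet $\dist(0,[x,-y])=\nu/(4-\nu)\approx\nu/4$; smoothing to $\ell_q^2$ with $q$ large produces uniformly convex examples, so no manipulation of ``unit norms, deficit $<\varphi$, a point of norm $<\eps'$ on $[x,-y]$'' can yield a constant better than $4$. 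What your argument does prove, cleanly and completely, is $\delta(\eps)\ge\frac12\Phi(\eps/4,1)$: run it with $\varphi:=\Phi(\eps/4,1)$, obtaining $\|p\|<\eps/4$ and hence $\|x-y\|<\eps$, the desired contradiction.

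For perspective: the paper secures the constant $3$ only through the assertion $\|(x+y)/2\|=\dist(0,[x,y])$, stated without proof. That identity is an inner-product-space fact (the foot of the perpendicular from $0$ to a chord with equidistant endpoints is its midpoint) and fails in general normed spaces; the correct general relation for chords with unit endpoints is $\|(x+y)/2\|\le 2\,\dist(0,[x,y])$, with the factor $2$ asymptotically sharp. Inserting this into the paper's argument degrades its constant to just below $\eps/4$ as well. So your difficulty is not an artifact of your route: with $\eps/4$ in place of $\eps/3$ your proof is complete and simpler than the paper's, while with $\eps/3$ neither your sketch nor the paper's printed argument constitutes a proof.
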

\begin{proof}
Let $\eps_0$ be the characteristic of convexity of $X$ and denote for simplicity $\varphi : (0,\infty) \to (0,1)$, $\varphi(\eps)=\Phi(\eps,1)$. 

We show first that $\eps_0 < 2$. If $\eps_0 = 2$, then $\delta(2-\varphi(1/2)) = 0$, so there exist $x,y \in X$ with 
\[\|x\|=\|y\|=1, \quad \|x-y\| \ge 2-\varphi(1/2), \quad \text{and} \quad \left\|\frac{x+y}{2}\right\| > \frac{1}{2}.\] 
Denote $t=\left(2-\varphi(1/2)\right)^{-1}$. Then $1-t = \left(1 - \varphi(1/2)\right)\left(2-\varphi(1/2)\right)^{-1}$,
\[\|x\| = 1 \le t\|x-y\| \quad \text{and} \quad \|y\| = 1 \le (1-t)\|x-y\| + \varphi(1/2).\]
Letting $w = (1-t)x+ty$, by uniform uniqueness, 
\[\left\|\frac{x+y}{2}\right\| \le \|w\| \le \frac{1}{2},\] 
a contradiction. 

Since $\delta$ is continuous on $[0,2)$, we have $\delta(\eps_0) = 0$. Suppose that $\eps_0 > 0$. Then we can take $\eps \in (\eps_0,2)$ such that $\delta(\eps) < \varphi(\eps_0/2)/2$. Applying Lemma \ref{lemma-mod-conv}, there exist $x,y \in X$ such that 
\[\|x\|=\|y\|=1, \quad \|x-y\|\ge 2(1-\delta(\eps)), \quad \text{and} \quad \left\|\frac{x+y}{2}\right\| > \frac{\eps_0}{2}.\] 
Let now $t=\left(2-\varphi(\eps_0/2)\right)^{-1}$. Then $1-t = \left(1 - \varphi(\eps_0/2)\right)\left(2-\varphi(\eps_0/2)\right)^{-1}$,
\[\|x\| \le t\|x-y\| \quad \text{and} \quad \|y\| \le (1-t)\|x-y\| + \varphi(\eps_0/2).\]
Denoting again $w = (1-t)x+ty$, by uniform uniqueness, 
\[\left\|\frac{x+y}{2}\right\| \le \|w\| \le \frac{\eps_0}{2},\] 
another contradiction. Therefore, $\eps_0 = 0$, so $X$ is uniformly convex.

Assume now that $\delta(\eps) < \varphi(\eps/3)/2$ for some $\eps \in (0,2]$. By Lemma \ref{lemma-mod-conv}, there exist $x,y \in X$ satisfying
\[\|x\|=\|y\|=1, \quad \|x-y\| \ge 2(1-\delta(\eps)), \quad \text{and} \quad \left\|\frac{x+y}{2}\right\| > \frac{\eps}{3}.\] 
Arguing as before one shows that the assumption is false, hence the desired estimate holds.
\end{proof}
\subsection{Uniform uniquely geodesic spaces and uniform betweenness}
In a uniquely geodesic space $X$, the betweenness property $\BW$ given in Definition \ref{definition-betweenness} can be reformulated in the following equivalent way: for every four pairwise distinct points $x,y,z,w \in X$, 
\begin{eqnarray}\nonumber
\left.\begin{array}{l}
y \in [x,z]\\
z \in [y,w]
\end{array}
\right\}
& \Rightarrow & y,z \in [x,w].
\end{eqnarray}
Moreover, $\BW$ is equivalent to strong convexity and there exist uniquely geodesic spaces that do not satisfy $\BW$ (see \cite{LNP1} and \cite{Ka}). Recall however that condition \ref{eq-metric-conv} implies $\BW$. We now show that uniformly uniquely geodesic spaces where \eqref{eq-metric-conv} holds satisfy $\UBW$ (see Definition \ref{def-uniform-btw}). In addition, given a modulus of uniform uniqueness, one can convert it into a modulus of uniform betweenness. 

\begin{thm}\label{thm-UU-UBTW}
Let $(X,d)$ be a uniformly uniquely geodesic space with a modulus of uniform uniqueness $\Phi.$ Additionally, suppose that \eqref{eq-metric-conv} holds for all $x,y,z \in X$ and all $t \in [0,1]$. Then $X$ satisfies $\UBW$ and the mapping $\Theta :(0,\infty)^3\ \to (0,\infty )$ defined by
\[\Theta(\eps,a,b) =\min\left\{a, \Phi\left(\min\left\{\frac{a\eps}{8b}, \frac{a}{2}\right\},b\right)\right\}\]
is a modulus of uniform betweenness for $X$.
\end{thm}
\begin{proof}
Let $\eps, a, b > 0$ and $\theta = \Theta(\eps,a,b)$. Take $x,y,z,w \in X$ such that $\sep\{x,y,z,w\}\ge a$, $\diam\{x,y,z,w\} \le b$,
\[d(x,y)+d(y,z)\le d(x,z)+\theta \quad \text{and} \quad d(y,z)+d(z,w)\le d(y,w)+ \theta.\]
As $\theta \le a$ and $d(y,z) \ge a$, there exists $t \in [0,1]$ such that $d(x,y) =td(x,z)$. Similary, one can find $s \in [0,1]$ such that $d(y,z) = sd(y,w)$. Then
\[d(y,z) \le (1-t)d(x,z) + \theta \quad \text{and} \quad d(z,w)\le (1-s)d(y,w)+ \theta.\]
Denoting $y' = (1-t)x+tz$ and $z'=(1-s)y+sw$, by uniform uniqueness, we obtain 
\[\max\left\{d(y,y'),d(z,z')\right\} \le \min\left\{\frac{a\eps}{8b}, \frac{a}{2}\right\}.\]
Using \eqref{eq-metric-conv} we get
\[d(w,y') \le (1-t)d(w,x) + td(w,z),\]
from where
\begin{align*}
(1-t)d(w,x) & \ge d(w,y') - d(w,z) + (1-t)d(w,z)\\
& \ge d(w,y) - d(y,y') - d(w,z') - d(z,z') + (1-t)d(w,z)\\
& = d(y,z') - \left(d(y,y') + d(z,z')\right) + (1-t)d(w,z)\\
& \ge d(y',z) - 2\left(d(y,y') + d(z,z')\right) + (1-t)d(w,z)\\
& = (1-t)d(x,z) - 2\left(d(y,y') + d(z,z')\right) + (1-t)d(w,z)\\
& \ge (1-t)d(x,z) + (1-t)d(w,z) - \frac{a\eps}{2b}.
\end{align*}
Note that $0 < a/(2b) \le 1-t$ because
\[a \le d(y,z) \le d(y,y') + d(y',z) \le \frac{a}{2} + (1-t)d(x,z) \le \frac{a}{2} + (1-t)b.\]
Therefore, $d(x,z) + d(w,z) \le d(x,w) + \eps$.
\end{proof}
\begin{re} 
As a corollary to Theorem \ref{thm-UU-UBTW} and the transformation formulas in 
Proposition 
\ref{prop-modulus-UBW'} we can replace the $\Theta$ above in the linear 
case by 
\[ \Theta'(\eps,a,b):=a\cdot\min\left\{ \frac{1}{2},\frac{\eps}{4b},
\Phi\left(\min\left\{ \frac{\eps}{192b},\frac{1}{4}\right\},3\right) 
\right\}, \] 
which has - for the $\Phi$'s as constructed in (\ref{eq-mod-uniq-Lp}) and 
(\ref{eq-mod-uniq-p-UC}) - a better dependence 
w.r.t. $a$ than $\Theta$ resulting in a better dependence of 
$\Theta'(\eps,\eps,b)$ 
(used in Theorem \ref{thm-UC-UU}) w.r.t $\eps.$
\end{re}

The next remark is based on the discussion from this section and allows us to obtain a large number of spaces where our main results can be applied.

\begin{re}
It is immediate that Theorem \ref{thm-main} and Corollary \ref{co-main} apply in particular for $L_p$ spaces over measurable spaces with $1 < p < \infty$ and $\CAT(\kappa)$ spaces with diameter smaller than $\pi/(2\sqrt{\kappa})$ if $\kappa > 0$. For these classes of spaces, using \eqref{eq-mod-uniq-Lp}, \eqref{eq-mod-uniq-p-UC} and Theorem \ref{thm-UU-UBTW}, we have an explicit modulus of uniform betweenness $\Theta$ and therefore we can compute the exact expression of the rate of convergence provided by Corollary \ref{co-main}. Disregarding the quantitative aspect, in the setting of $\CAT(\kappa)$ spaces, this recovers corresponding results from \cite{AleBisGhr10}. Observe also that Theorem \ref{thm-main} guarantees the success of the lion when the Lion-Man game is played in a bounded convex subset of a uniformly convex normed space. 
\end{re}

\section{Uniform betweenness without unique geodesics}\label{section-UBTW-nonuniquely-geod}
In this section we consider two particular instances of geodesic metric spaces where geodesics joining two points are not necessarily unique, but which still satisfy $\UBW$: Ptolemy spaces and a certain nonstrictly convex normed space of dimension $3$. In each case we compute a modulus of uniform betweenness.

\subsection{Ptolemy spaces}

\begin{de} A metric space $(X,d)$ is called a {\it Ptolemy space} if 
\[ d(x,z)d(y,w)\le d(x,y)d(z,w)+d(x,w)d(y,z),\]
for all $x,y,z,w\in X$.
\end{de}

In normed spaces, the above inequality (with the natural metric $d(x,y) = \|x-y\|$ for $x,y \in X$) provides a characterization of inner product spaces. In the geodesic setting, Ptolemy spaces proved to be significant in the study of the boundary at infinity of $\CAT(-1)$ spaces (see \cite{FoeSch11}). Every $\CAT(0)$ space is a Ptolemy space, but there exist complete bounded geodesic Ptolemy spaces that are not uniquely geodesic. However, in the presence of Busemann convexity, Ptolemy spaces satisfy the $\CAT(0)$ condition (see \cite{FoeLytSch07}).

In \cite{Nic13}, it was shown that Ptolemy metric spaces have $\BW$. The proof can easily be seen to establish even $\UBW$ and the following modulus can be extracted.
\begin{pro} \label{prop-modulus-Ptolemy}
Let $(X,d)$ be a Ptolemy space. Then $\Theta(\eps,a,b):=
\sqrt{b^2+a\eps}-b$ is a modulus of uniform betwenness.
\end{pro}
\begin{proof} 
Let $\eps, a, b > 0$ and $\theta = \Theta(\eps,a,b)$. Take $x,y,z,w\in X$ with $\sep\{ x,y,z,w\}\ge a>0$ and 
$\diam\{ x,y,z,w\}\le b.$
Let 
\[ d(x,y)+d(y,z)\le d(x,z)+\theta \ \mbox{and} \ 
d(y,z)+d(z,w)\le d(y,w)+\theta.\]
Then 
\begin{align*}
& d(x,y)d(y,z)+d(y,z)^2+d(x,y)d(z,w)+d(y,z)d(z,w) = \left( d(x,y)+d(y,z)\right)\left( d(y,z)+d(z,w)\right) \\ 
\quad & \le (d(x,z)+\theta)(d(y,w)+\theta)\le d(x,z)d(y,w)+2\theta b+\theta^2\\
\quad & \le d(x,y)d(z,w)+d(x,w)d(y,z)+2\theta b+\theta^2.
\end{align*}
Hence 
\[ d(x,y)d(y,z)+d(y,z)^2+d(y,z)d(z,w)\le 
d(x,w)d(y,z)+2\theta b+\theta^2 \] and so dividing  by $d(y,z)\ge a$ 
gives 
\[ d(x,z)+d(z,w)\le d(x,y)+d(y,z)+d(z,w)\le d(x,w)+\frac{2\theta b+\theta^2}{a}
=d(x,w)+\eps.\] 
\end{proof}

\subsection{A renorming of $\mathbb{R}^3$}

In \cite{DW}, the authors considered the normed space $(\R^3,\|\cdot\|)$, where for $(x,y,z) \in \R^3$,
\[ \| (x,y,z)\|:=\sqrt{|z^2-(x^2+y^2)|+3z^2+x^2+y^2},\]
and showed that it is not strictly convex, but satisfies $\BW$. Here we extract a modulus of uniform betweenness by analyzing 
the proof from \cite{DW} quantitatively. In the rest of this section 
$\|\cdot\|$ refers to the above norm, while $\|\cdot\|_2$ denotes the 
Euclidian norm on $\R^3.$ We need several lemmas, the first two of which 
are easy to check.

\begin{lem}\label{l1}  
The function $\sqrt{\cdot}$ is $1$-Lipschitz on $[1/4,\infty)$ and $2$-Lipschitz on $[1/16,\infty).$
\end{lem}

\begin{lem}\label{l2}
For all $u \in \R^3$, $\| u\|_2\le \| u\| \le 2\| u\|_2.$
\end{lem}

Define $X_1:=\{ (x,y,z) \in \R^3: z^2\le x^2+y^2\}$, $X_2:=\{ (x,y,z) \in \R^3: z^2\ge x^2+y^2\}$, and given $\eps > 0$, $X^{\eps}_2:=\{ 
(x,y,z) \in \R^3: z^2+\eps\ge x^2+y^2\}$. Clearly, for $u \in \R^3$,
\[
\|u\|=\left\{
\begin{array}{ll}
\sqrt{2}\| u\|_2, & \mbox{if } u\in X_1,\\
2|z|, & \mbox{if } u\in X_2.
\end{array}
\right.
\]

\begin{lem}\label{l3}
If $0<\eps\le 1/4$, $u=(x,y,z)\in X^{\eps/4}_2$ and $1\ge \| u\|\ge 1-\eps,$ then $1 -  2\eps \le 2|z|\le 1+\eps$.
\end{lem}
\begin{proof} Using Lemma \ref{l2} and the assumptions we have
\begin{equation}\label{l3-eq1}
x^2+y^2+z^2 = \| u\|^2_2 \ge \frac{1}{4}\|u\|^2 \ge \frac{9}{64} \ge \frac{1}{16}.
\end{equation}
Applying now Lemma \ref{l1} and using that 
$u\in X^{\eps/4}_2$ we get
\[\left| \|u\|-\sqrt{4z^2+\frac{\eps}{4}}\right|=
\left| \| u\| -\sqrt{\left| z^2+\frac{\eps}{4}-(x^2+y^2)\right|
+3z^2+x^2+y^2}\right| \le 2\cdot\frac{\eps}{4}=\frac{\eps}{2}.\]
By \eqref{l3-eq1}, we can deduce that $2z^2 + \eps/4 \ge 9/64$, hence, $4z^2\ge 1/16$. Using again Lemma \ref{l1},
\[ \left| \| u\|-2|z|\right|\le\left|\sqrt{4z^2+\frac{\eps}{4}}-
\sqrt{4z^2}\right|+\frac{\eps}{2} \le \eps. \]
Since $1\ge \| u\|\ge 1-\eps,$ this implies $1+\eps\ge 2|z|\ge 1-2\eps.$
\end{proof} 

\begin{lem}\label{l4} If $0<\eps\le1/8, \, \| u\|=\| v\| =1, \,
u,v\in X^{\eps/4}_2$ and $\left\| \frac{u+v}{2}\right\|\ge 1-1/25,$ 
then $\frac{u+v}{2}\in X^{\eps}_2$.
\end{lem}
\begin{proof} 
Suppose $u=(x_1,y_1,z_1), v=(x_2,y_2,z_2)$ and 
denote $(x,y,z) = u+v$. By Lemma \ref{l3}, 
\[\frac{3}{8} \le \frac{1}{2}-\eps \le |z_i|\le \frac{1}{2}+\frac{\eps}{2} \le \frac{9}{16} \quad \text{for }i \in \{1,2\}.\] 
Case 1: $z_1\cdot z_2 < 0$, say $z_1>0>z_2.$ Then $z^2=\left( |z_1|-|z_2|\right)^2\le (3\eps/2)^2 \le 9/256$.  
From Lemma \ref{l2} and $\| (u+v)/2\|\ge 1-1/25$, it easily follows that $x^2+y^2\ge 1/4 $ and so Lemma \ref{l1}
applied to $|(|z^2-(x^2+y^2)|+3z^2+x^2+y^2)-2(x^2+y^2)|\le 4\cdot\frac{9}{256}$ 
gives 
\[ \left|\|u+v\| - \sqrt{2(x^2+y^2)}\right| = \left|\sqrt{ |z^2-(x^2+y^2)|+3z^2+x^2+y^2}-\sqrt{2(x^2+y^2)}\right| 
\le\frac{9}{64},\]
so
\begin{equation}\label{l4-eq1}
 \left\| \frac{u+v}{2}\right\|  \le \frac{1}{\sqrt{2}}
\sqrt{x^2+y^2}+ \frac{9}{128}.
\end{equation}
Using that $u,v\in X^{\eps/4}_2$ and Lemma \ref{l1} we have
\begin{align*}
\sqrt{ x^2+y^2} &\le \sqrt{x^2_1+y^2_1}+\sqrt{x^2_2+y^2_2} \le \sqrt{z^2_1+\frac{\eps}{4}}+\sqrt{z^2_2+\frac{\eps}{4}}\le |z_1|+\frac{\eps}{2}+|z_2|+\frac{\eps}{2}\le 1 + \eps + \eps \le \frac{5}{4}.
\end{align*}
Using this inequality in \eqref{l4-eq1} we obtain $\|(u+v)/2\| <1-1/25$ and so this case cannot occur. \\
Case 2: $z_1 \cdot z_2 > 0$.  Then $z^2 = (|z_1|+|z_2|)^2$. As before, 
\[x^2+y^2 \le \left(|z_1|+\frac{\eps}{2}+|z_2|+\frac{\eps}{2}\right)^2\le (|z_1|+|z_2|)^2+4\eps=z^2+4\eps,\]
where the last inequality follows since $2(|z_1| + |z_2|) + \eps \le 4$. This ends the proof.
\end{proof}

\begin{lem}\label{l5} Let $u,v\in X_1$ with $\| u\|=\| v\| =1$ and $\eta$ 
be a modulus of uniform convexity for $(\R^3,\|\cdot\|_2),$ e.g. 
$\eta(\eps)=\eps^2/8.$ Let $0<\eps\le 1.$ 
\begin{enumerate}
\item 
If $\frac{u+v}{2}\in X_1$ and $\left\|\frac{u+v}{2}\right \|\ge 1-\eta(\sqrt{2}\cdot\eps),$
then $\| u-v\|_2\le \eps.$
\item
If $\frac{u+v}{2}\in X_2$ and $\left\|\frac{u+v}{2}\right \|\ge 1-\frac{\eps}{2},$ then 
$u,v\in X^{\eps}_2.$
\end{enumerate}
\end{lem}
\begin{proof} 1. Let $\frac{u+v}{2}\in X_1$ and $\left\|\frac{u+v}{2}\right \|\ge 1-\eta(\sqrt{2}\cdot\eps)$ and define $\tilde{u}:=\sqrt{2}u$, $\tilde{v}:=
\sqrt{2}v.$ Since $(u+v)/2\in X_1$ we have 
\[ \left\| \frac{\tilde{u}+\tilde{v}}{2}\right\|_2=
\sqrt{2}\left\| \frac{u+v}{2}\right\|_2=\left\| \frac{u+v}{2}\right\|\ge 
1-\eta(\sqrt{2}\cdot\eps). \] Thus
$\sqrt{2}\| u-v\|_2 =\| \tilde{u}-\tilde{v}\|_2\le \sqrt{2}\cdot \eps$ 
since $1=\| u\| =\| \tilde{u}\|_2$ and analogously for $v.$\\\
2. Suppose $u=(x_1,y_1,z_1)$ and $v=(x_2,y_2,z_2)$. Since $\frac{u+v}{2}\in X_2$ and $|z_1+z_2|=\left\| \frac{u+v}{2}\right\| \ge 
1-\eps/2,$ it follows that $|z_1| + |z_2| \ge |z_1+z_2|\ge 1-\eps/2.$
As $u\in X_1,\| u\|=1$ we have $1/2=x^2_1+y^2_1+z^2_1\ge 2z^2_1$ and 
so $|z_1|\le1/2$. Likewise, $|z_2|\le1/2$. Consequently, 
$(1-\eps)/2\le |z_i|\le 1/2$ for $i \in \{1,2\}$ and so 
\[ x^2_1+y^2_1=\frac{1}{2}-z^2_1\le \frac{1}{2}-\frac{(1-\eps)^2}{4}
\le \frac{1}{2}-\frac{1-2\eps}{4}= \frac{1+2\eps}{4}.\]
Since $z^2_1\ge (1-\eps)^2/4\ge (1-2\eps)/4,$
we have that $z^2_1+\eps\ge x^2_1+y^2_1,$ i.e. $u\in X^{\eps}_2$ and, likewise,
$v\in X^{\eps}_2.$  
\end{proof}

\begin{lem}\label{l6}
Let $\eta$ be as in Lemma \ref{l5}, $u\in X_1,v\in X_2, \| u\|=\| v\| =1, 
\, 0<\eps\le 1$ and 
\[ \left\| \frac{u+v}{2}\right\| \ge 1-\min\left\{ \frac{\eta\left(
\frac{\sqrt{2}\cdot\eps}{4}\right)}{\sqrt{2}},
\frac{\eps}{2}\right\}. \]
\begin{enumerate}
\item
If $\frac{u+v}{2}\in X_1,$ then $\| u-v\|\le \eps.$
\item
If $\frac{u+v}{2}\in X_2,$ then $u\in X^{\eps}_2.$
\end{enumerate}
\end{lem}
\begin{proof} 1. Suppose $v=(x_2,y_2,z_2)$. By the assumptions on $u$ and $v$ and Lemma \ref{l2} we know that 
\[\|u\|_2 = 1/\sqrt{2}=\sqrt{2z_2^2} \ge \|v\|_2 \ge 1/2.\]
Let $\tilde{\eps}:=\sqrt{2}\cdot \eps/4.$ Using the assumptions on 
$\frac{u+v}{2}$ we then have 
\[ \sqrt{2}\left( 1-\frac{\eta(\tilde{\eps})}{\sqrt{2}}\right)\le \frac{1}{\sqrt{2}}\|u+v\| =
\| u+v\|_2\le \| u\|_2+\| v\|_2 \le \sqrt{2} \]
and so $\| u\|_2+\| v\|_2 \le \| u+v\|_2 +\eta(\tilde{\eps})$. One can now show that
\[ \left\| \frac{\frac{u}{\| u\|_2}+\frac{v}{\| v\|_2}}{2}\right\|_2\ge 
1-\eta(\tilde{\eps}), \]
which yields, by uniform convexity, $\left\| \frac{u}{\| u\|_2}- \frac{v}{\| v\|_2}\right\|_2\le 
\tilde{\eps}.$ Then, 
\[ \left\| \frac{u\| v\|_2}{\| u\|_2}-v\right\|\le 2\left\| \frac{u\| v\|_2}{\| u\|_2}-v\right\|_2 \le 
2\tilde{\eps} \| v\|_2 \le \frac{\eps}{2}. \]
As $\|u\|=\| v\|=1$ we obtain
\[ \left| \frac{\| v\|_2}{\| u\|_2}-1\right| =
\left| \left\| \frac{u\| v\|_2}{\| u\|_2}\right\|-\| v\|\right|
\le \left\| \frac{u\| v\|_2}{\| u\|_2} -v\right\| \le \frac{\eps}{2}.\]
Put together 
\[ \| u-v\| \le
\left\| \frac{u\| v\|_2}{\| u\|_2}-v\right\| +\left\| \frac{u\| v\|_2}{\| 
u\|_2}-u\right\| \le \frac{\eps}{2}+\left| \frac{\| v\|_2}{\| u\|_2}-1\right|
\cdot\| u\| \le \eps.\]
2. Here the reasoning is exactly as in the proof of Lemma \ref{l5}.2 
except that to show that $|z_2|\le 1/2$ we this time use 
$1=\| v\|=2|z_2|$ since $v\in X_2.$
\end{proof}

\begin{pro}\label{prop-modulus-DW}
$\delta(\eps):=\min\left\{ \frac{\eta\left(\frac{\sqrt{2}\cdot\eps}{256}
\right)}{\sqrt{2}},
\frac{\eps}{128}\right\}$ is a modulus for the property 
$\UBW'$ for $(\R^3,\|\cdot \|)$ when $0<\eps\le 1.$
\end{pro}
\begin{proof} Let $u,v,w\in\R^3$ with $\| u\| =\| v\|= \| w\|=1$ and $\min\left\{\left\| \frac{u+v}{2}\right\|,
\left\| \frac{v+w}{2}\right\| \right\}\ge 1-\delta(\eps).$ By Lemmas \ref{l5} 
and \ref{l6} we have either 
\begin{enumerate}
\item[(i)] $\| u-v\| \le\eps/64$ or $\| v-w\|\le \eps/64$ or
\item[(ii)] $u,v,w\in X^{\eps/64}_2.$
\end{enumerate}
(i) W.l.o.g. we can consider the case $\| u-v\|\le\eps/64$. By assumption 
$\| v+w\|\ge \| v\| +\| w\|-\eps/64$. Applying Lemma \ref{lemma-norm-comb}, $\| 2v+w\|\ge 2\| v\|+\| w\| -2\eps/64=3-\eps/32$.
Hence 
\[ \| u+v+w\|\ge \| 2v+w\| -\| u-v\|\ge 3-\frac{\eps}{32}-\frac{\eps}{64}>3-\eps.\]
(ii) Suppose $u=(x_1,y_1,z_1)$, $v=(x_2,y_2,z_2)$, $w=(x_3,y_3,z_3)$ and denote $(x,y,z) = u+v+w$. Lemma \ref{l4} gives $\frac{u+v}{2},\frac{v+w}{2}\in X^{\eps/16}_2$. Applying Lemma \ref{l3} first to $u,v,w\in X^{\eps/64}_2$ we have 
\[ \frac{1}{2}+\frac{\eps}{32}\ge |z_i|\ge\frac{1}{2}-\frac{\eps}{16} \ge \frac{1}{4}\quad \text{for } i \in \{1,2,3\}\] 
and then to $\frac{u+v}{2},\frac{v+w}{2}\in X^{\eps/16}_2$,
\[ 1+\frac{\eps}{4}\ge |z_1+z_2|,|z_2+z_3|\ge 1-\frac{\eps}{2}.\]
Hence $z_1,z_2,z_3$ have the same sign, say $z_1,z_2,z_3>0.$ Using that $u,v,w\in X^{\eps/64}_2$, $z_i^2 \ge 1/16$ for $i \in \{1,2,3\}$, Lemma \ref{l1} and $2z + 3\eps/32 \le 4$ we obtain 
\begin{align*}
x^2+y^2 &\le \left( \sqrt{x^2_1+y^2_1} +\sqrt{x^2_2+y^2_2}+ \sqrt{x^2_3+y^2_3}\right)^2\le
\left( \sqrt{z^2_1+\frac{\eps}{64}}+ \sqrt{z^2_2+\frac{\eps}{64}} +\sqrt{z^2_3+\frac{\eps}{64}}\right)^2\\
& \le \left( z_1+\frac{\eps}{32}+z_2+\frac{\eps}{32} +z_3+\frac{\eps}{32}\right)^2 = \left(z + \frac{3}{32}\eps\right)^2 \le z^2+4\frac{3}{32}\eps=
z^2+\frac{3}{8}\eps.
\end{align*}
Observe that $z \ge 3/2 - 3\eps/16 \ge 1/2$, so $z^2 \ge 1/4$, and so, appealing again to Lemma \ref{l1},
\[ \left| \| u+v+w\|-\sqrt{4z^2+\frac{3}{8}\eps}\right| =
\left| \| u+v+w\|-\sqrt{\left|z^2+\frac{3}{8}\eps-(x^2+y^2)\right|+3z^2+x^2+y^2}\right| \le \frac{3}{8}\eps. \]
Hence 
\[ \| u+v+w\|\ge \sqrt{4z^2+\frac{3}{8}\eps}-\frac{3}{8}\eps\ge 2z-\frac{3}{8}\eps\ge 2\left(\frac{3}{2}-
\frac{3}{16}\eps\right)-\frac{3}{8}\eps>3-\eps.\]
\end{proof}

\begin{re}
Propositions \ref{prop-modulus-Ptolemy}, \ref{prop-modulus-DW} and \ref{prop-modulus-UBW'} provide an explicit modulus of uniform betweenness for  the classes of spaces discussed in this section. Thus, we can apply Theorem \ref{thm-main} and Corollary \ref{co-main} for the Lion-Man game played in bounded convex subsets of these spaces.
\end{re}

\section{Comments on the use of logic in arriving at the quantitative analysis (`proof mining')}

The point of departure for the investigation in this paper has been the 
noneffective proof for the convergence $\lim\limits_{n\to \infty} 
d(L_{n+1},M_n)=0$  
for {\it compact} uniquely geodesic spaces 
satisfying the betweenness property as given in \cite{LNP1} (Theorem 
4.2). Since the sequence $(d(L_{n+1},M_n))$ {\it decreases} to $0$ 
this statement is 
of the logical form 
\[ \forall k\in\N\,\exists n\in\N\, \left( d(L_{n+1},M_n)<2^{-k}\right)\in \forall\exists.\]
General logical metatheorems due to the first author (see, e.g., 
\cite{K08}) guarantee in such situations the extractability of an 
explicit and effective rate of convergence which only depends on general 
metric bounds, a modulus of total boundedness (as a quantitative form of 
the compactness assumption), and moduli providing quantitative forms of 
`uniformized' versions of being `uniquely geodesic' and satisfying the 
`betweenness property'. Technically speaking, these moduli serve to 
produce a solution for the monotone G\"odel functional interpretation (see \cite{K08}) of 
the respective properties which in this uniformized form become (essentially) 
purely universal assumptions in these moduli which can be taken as   
number-theoretic functions (although, for convenience, we used their 
$\varepsilon/\delta$-variants). Hence, the moduli can w.l.o.g. be assumed to be 
self-majorizing (in the technical sense of \cite{K08}) which would not be the 
case if these moduli would not be uniform by depending on points in $X.$

Subsequently it turned out that the uniqueness of geodesics (and hence the 
use of a modulus of uniform uniqueness in the quantitative analysis) could be 
avoided.

The actual extraction of the rate of convergence, given a modulus 
of uniform betweenness, from (the aforementioned generalization to the 
non-uniquely 
geodesic case of) the 
convergence proof of \cite[Theorem 4.2]{LNP1} makes this proof completely 
constructive by 
avoiding altogether the (in fact iterated and nested) 
sequential compactness argument used in the 
original proof and even the 
need to assume compactness in the first place (but only boundedness) 
{\it once} the betweenness property is written in its uniform variant 
(to which it is 
equivalent in the presence of compactness). As a consequence, 
the actual rate of convergence extracted 
only uses a modulus of uniform betweenness, but no modulus of total 
boundedness and holds for any bounded metric 
space satisfying the uniform betweenness property with the given modulus. 
Moreover, avoiding sequential compactness explains the very low complexity 
of the rate of convergence. By contrast, the use of sequential 
compactness, being equivalent to the principle of so-called arithmetic 
comprehension, see \cite{Sim99}, is known to in general require 
the very complex computational schema of so-called bar recursion for its 
functional interpretation (see \cite{K08}, chapter 11).
The phenomenon that any need to assume compactness altogether 
disappears from the proof in the 
process of its logical analysis is a feature of this particular proof 
being analyzed (see \cite{KLN} for situations where this is not 
the case).

Let us explain the nature of our uniformization of the betweenness property 
and the reason why the various moduli $\Theta$ of uniform betweenness could
be produced in all the cases considered in this paper from a logical point 
of view: the betweenness property can be logically re-written as  
\[ \ba{l} \forall x,y,z,w\in X\,\forall k,m\in\N\,\exists n\in\N \\[2mm] 
\left( \left. \ba{l} \sep\{ x,y,z,w\}\ge 2^{-k}  \\[1mm] 
d(x,y)+d(y,z)\le d(x,z) +2^{-n}  \\[1mm] 
d(y,z)+d(z,w)\le d(y,w)+2^{-n} \ea \right\}
\rightarrow 
d(x,z)+d(z,w)<d(x,w)+2^{-m}\right), \ea\]
where $\Big(\ldots\Big)$ is equivalent to a purely existential 
formula $A_{\exists}$ (since $\le,<$ between reals are purely 
universal resp. existential relations; see \cite{K08}).  

Logical bound extraction theorems (see 
e.g. in \cite{K08} Theorem 17.52 for the metric and Theorem 17.69
for the normed case) allow one to extract from (suitable) proofs of 
$X$ satisfying the betweenness property, bounds (and hence {\it 
realizers}) $\Theta(m,b,k)$ 
for $\exists n\in\N$ which only depend on $k,m$ and {\it majorants} for 
$x,y,z,w$ relative to some reference point $a\in X$ 
one can choose. For $a:=x$ this amounts to having a bound $b\in\N$ with 
$b\ge d(x,y),d(x,z),d(x,w)$ which (up to a factor $2$) 
is equivalent to having a $b$ with $b\ge\diam\{ x,y,z,w\}.$ 
$\Theta(n,k,b)$ is - when written more conveniently in $\eps/\delta$-notation - 
nothing else but our concept of a modulus for uniform betweenness.

In a similar way, one can extract a modulus of uniform 
uniqueness from suitable proofs of ordinary uniqueness.

By `suitable' proofs we mainly mean that the space $X$ 
in question either is a concretely definable boundedly compact space 
(such as $(\R^3,\|\cdot\|_{\rm DW})$ from the previous section) or - 
more importantly - the proof only uses that $X$ 
belongs to a general class of `abstract' spaces. 
In the case of our paper, the relevant classes of spaces are  
metric spaces, uniformly 
convex geodesic and normed spaces (with given modulus), 
geodesic spaces, uniformly uniquely geodesic spaces 
(with given modulus) satisfying the convexity condition 
(\ref{eq-metric-conv}) and Ptolemy spaces 
which all are permitted in (suitable adaptations) of the aforementioned 
metatheorems. For metric spaces, uniformly convex geodesic as well as 
uniformly convex normed spaces such metatheorems are explicitly formulated 
in the literature. For Ptolemy spaces one only has to observe that the 
defining axiom is purely universal (and any such assumption is permitted).
This also applies to the case of uniformly uniquely geodesic spaces (with 
given modulus) with a convex metric 
which can be axiomatized as in \cite{K08} by a triple $(X,d,W)$ 
where now, however, $W$ is only required to satisfy the axioms $(i),(ii)$ 
in Definition 17.9 in \cite{K08} plus another universal axiom 
stating that the given modulus satisfies the uniform uniqueness condition 
for the geodesics. Note that it is only due to the presence of 
this uniqueness axiom 
that the axiom $(i)$ expresses the convexity of the metric 
(which speaks about arbitrary geodesics and not just the one selected by 
$W$).  

All this explains on general logical grounds why the moduli
of uniform uniqueness (for geodesics) 
and uniform betweenness produced in this paper could be 
extracted from the resp. proofs of ordinary uniqueness and betweenness 
(provided that assumptions such as `strict convexity' had been properly 
uniformized to uniform convexity). 

Such uniformizations (and extractions of the resp. moduli) are 
systematically performed by the logical {\it proof-interpretations} 
(see \cite{K08}) used to 
establish the bound extraction theorems, namely modern 
monotone versions of G\"odel's so-called functional (`Dialectica') 
interpretation. In fact, the monotone functional interpretation allows 
one to eliminate (and hence to use freely) a {\it nonstandard} 
uniform boundedness principle $\exists$-UB$^X$ (see \cite{K08}, 
section 17.7 and, 
in particular, Theorem 17.101) which can 
easily be seen to imply the equivalence 
of ordinary uniqueness (of geodesics) and uniform uniqueness (with 
modulus) and of betweenness and uniform betweenness (with modulus) for 
any bounded geodesic resp. metric space (see section 17.8 in 
\cite{K08} for many similar results). Model-theoretically speaking this 
roughly corresponds (noneffectively) 
to going to an ultrapower of the structures in 
question (see e.g. \cite{GK16}).

As usual with case studies in proof mining, when the actual extraction of 
the data in question is carried out it also comes with an ordinary 
analytic proof of their correctness which does not refer to any 
results from logic which, however, were instrumental for finding these data.

\section{Acknowledgements}
Part of this work was carried out while the authors visited the Mathematical Research Institute of Oberwolfach (Research in Pair stay 1911p) and the Institute of Mathematics of the University of Seville - IMUS. They would like to thank these institutions for the support. This work was also partially supported by DGES (Grant MTM2015-65242-C2-1P), DFG (Project KO 1737/6-1), and by a grant of the Romanian Ministry of Education and Research, CNCS - UEFISCDI, project number PN-III-P1-1.1-TE-2019-1306, within PNCDI III.

\end{document}